
\documentclass[11pt,a4paper]{article} 
\usepackage{amsmath,amssymb,latexsym,mathrsfs,amsthm,amscd}
\input amssym.def 
\def\R{\Bbb R} \def\C{\Bbb C} \def\H{\Bbb H} \def\O{\Bbb O} 
\def\I{\Bbb I} \def\N{\Bbb N}

\newtheorem{theo}{Theorem}[section] 
\newtheorem{lem}[theo]{Lemma} 
\newtheorem{pro}[theo]{Proposition} 
\newtheorem{rem}[theo]{Remark} 
\newtheorem{cor}[theo]{Corollary}

\DeclareMathOperator{\rt}{R}
\DeclareMathOperator{\lt}{L}

\DeclareMathOperator{\gl}{GL}
\DeclareMathOperator{\og}{O}

\newcommand{\spds}{\mathscr{S}_{\H}}
\newcommand{\stwo}{\mathscr{S}_{2}}
\renewcommand\Im{\mathop{\rm Im}\nolimits\hspace{-1pt}}

\title{The double sign of a real division algebra \\ of finite dimension greater than one}  
\author{Erik Darp\"o and Ernst Dieterich} 

\begin{document} 
\date{} 
\maketitle 

\begin{abstract}
\noindent 
For any real division algebra $A$ of finite dimension greater than one, the signs of the determinants of left 
multiplication and right multiplication by an element $a \in A \setminus \{0\}$ are shown to form an invariant of $A$, 
called its double sign. For each $n \in \{ 2,4,8 \}$, the double sign causes the category $\mathscr{D}_n$ of 
all $n$-dimensional real division algebras to decompose into four blocks. The structures of these blocks are closely 
related, and their relationship is made precise for a sample of full subcategories of 
$\mathscr{D}_n$. 
\end{abstract} 

\noindent 
Mathematics Subject Classification 2010: 17A35, 18B40, 54C05. 
\\[1ex]
Keywords: Real division algebra, double sign, groupoid, block decomposition. 

\section{Introduction}

Let $A$ be an algebra over a field $k$, i.e.~a vector space over $k$ equipped with a $k$-bilinear multiplication $A \times A \to A,\ (x,y) \mapsto xy$. Every element 
$a \in A$ determines $k$-linear operators $\lt_a: A \to A,\ x \mapsto ax$ and $\rt_a: A \to A$, \mbox{$x \mapsto xa$}. A {\it division algebra} over $k$ is a non-zero $k$-algebra 
$A$ such that $\lt_a$ and $\rt_a$ are bijective for all $a \in A \setminus \{0\}$. 

Here we concern ourselves with division algebras which are real and finite dimensional. They form a category $\mathscr{D}$ whose morphisms 
\mbox{$\varphi: A \to B$} are non-zero linear maps satisfying $\varphi(xy) = \varphi(x)\varphi(y)$ for all $x,y \in A$. Every morphism in $\mathscr{D}$ is injective.
For any positive integer $n$, the class of all $n$-dimensional objects in $\mathscr{D}$ forms a full subcategory $\mathscr{D}_n$ of $\mathscr{D}$. Every morphism in 
$\mathscr{D}_n$ is bijective. A famous theorem of Hopf \cite{hopf}, Bott and Milnor \cite{bottmilnor} and Kervaire \cite{kervaire58} asserts 
that $\mathscr{D}_n $ is non-empty if and only if $n \in \{ 1, 2, 4, 8\}$. It is well known (see e.g.~\cite{dieterichohman}) that 
$\mathscr{D}_1$ consists of one 
isoclass only, i.e.~$\mathscr{D}_1 = [\R]$. In this article we derive from an elementary topological argument a decomposition of the categories $\mathscr{D}_2$, $\mathscr{D}_4$, and 
$\mathscr{D}_8$ into four non-empty blocks each, and we investigate the close relationship between these four blocks. Thereby we recover, unify and generalize various 
phenomena that previously were known in shifting disguise and for special classes of real division algebras only.  

\section{Double sign decomposition of $\mathscr{D}_2$, $\mathscr{D}_4$, and $\mathscr{D}_8$}

The {\it sign map} $\mbox{sign}: \R \setminus \{0\} \to {\rm C}_2,\ \mbox{sign}(x) = \frac{x}{|x|}$, has its values in the cyclic group 
${\rm C}_2  =\{ 1,-1 \}$. The {\it generalized sign map} 
\[ s: \gl_\R(A) \to {\rm C}_2,\ s(x) = \mbox{sign}(\det(x)) \]
is well defined for every finite-dimensional real vector space $A$. Being composed of group homomorphisms, it is a group homomorphism. 

With any finite-dimensional real division algebra $A$ we associate the maps 
\[ L: A \setminus \{0\} \to \gl_\R(A),\ a \mapsto \lt_a \quad\mbox{and}\quad R: A \setminus \{0\} \to \gl_\R(A),\ a \mapsto \rt_a, \]
whose compositions with the generalized sign map we denote by
\[ \ell: A \setminus \{0\} \to {\rm C}_2,\ \ell(a) = s(\lt_a) \quad\mbox{and}\quad r: A \setminus \{0\} \to {\rm C}_2,\ r(a) = s(\rt_a). \] 
\begin{pro}\label{constant}
If $A \in \mathscr{D}$ has dimension greater than one, then both maps $\ell: A \setminus \{0\} \to {\rm C}_2$ and $r: A \setminus \{0\} \to {\rm C}_2$ are constant.
\end{pro}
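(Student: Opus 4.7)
The plan is to use a standard connectedness argument. First I would observe that because $\dim A \geq 2$, the punctured vector space $A \setminus \{0\}$ is connected (removing a single point from $\R^n$ leaves a path-connected space whenever $n \geq 2$). This is the only place where the hypothesis $\dim A > 1$ enters, and the conclusion is false for $\dim A = 1$ precisely because $\R \setminus \{0\}$ has two components.

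Next I would verify that the maps $L$ and $R$ are continuous from $A \setminus \{0\}$ to $\gl_\R(A)$. Continuity is immediate because $a \mapsto \lt_a$ and $a \mapsto \rt_a$ are the restrictions of linear (hence continuous) maps $A \to \mathrm{End}_\R(A)$; the fact that their image actually lies in $\gl_\R(A)$ is exactly the defining property of a division algebra, so this step uses the hypothesis $A \in \mathscr{D}$ but is otherwise immediate.

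Then I would invoke the fact that the generalized sign map $s : \gl_\R(A) \to \mathrm{C}_2$ is continuous, since $\det : \gl_\R(A) \to \R \setminus \{0\}$ is a polynomial and $\mathrm{sign} : \R \setminus \{0\} \to \mathrm{C}_2$ is continuous (both its fibers are open); equivalently, $s$ distinguishes the two connected components of $\gl_\R(A)$. Consequently $\ell = s \circ L$ and $r = s \circ R$ are continuous maps from the connected space $A \setminus \{0\}$ into the discrete two-element group $\mathrm{C}_2$, and any such map is constant.

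No step is really hard here; the only thing worth double-checking is that the target of $L$ and $R$ is genuinely $\gl_\R(A)$ rather than $\mathrm{End}_\R(A)$, so that composing with $s$ makes sense, and this is ensured by the hypothesis that $A$ is a division algebra. The whole argument is the elementary topological argument promised in the introduction.
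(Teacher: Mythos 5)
Your proposal is correct and is exactly the paper's argument: the paper likewise equips $A \setminus \{0\}$ with the Euclidean topology and ${\rm C}_2$ with the discrete topology, notes that $A \setminus \{0\}$ is connected and that $\ell$ and $r$ are continuous, and concludes that they are constant. Your version merely spells out the continuity of $L$, $R$, and $s$ in more detail than the paper does.
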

\begin{proof}
Equipping $A \setminus \{0\}$ with its standard Euclidean topology and ${\rm C}_2$ with the discrete topology we find that $A \setminus \{0\}$ is connected and both maps 
$\ell$ and $r$ are continuous, hence constant. 
\end{proof}
\noindent
For any real division algebra $A$ with $1 < \dim(A) < \infty$ we denote by $\ell(A)$ and $r(A)$ the unique values of $\ell$ and $r$ respectively. We call $\ell(A)$ the
{\it left sign} of $A$, $r(A)$ the {\it right sign} of $A$, and $p(A) = (\ell(A),r(A))$ the {\it sign pair} of $A$.
\begin{pro}\label{isomsign}
If $A,B \in \mathscr{D}$ have dimension greater than one and are isomorphic, then $p(A) = p(B)$.
\end{pro}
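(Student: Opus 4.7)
The plan is to transport left and right multiplications along an isomorphism and exploit the conjugation invariance of the determinant. Let $\varphi: A \to B$ be an isomorphism in $\mathscr{D}$. Since every morphism in $\mathscr{D}$ is injective and $\dim A = \dim B$, $\varphi$ is a linear bijection. Pick any $a \in A \setminus \{0\}$ and set $b = \varphi(a) \in B \setminus \{0\}$.

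Expanding the multiplicativity relation $\varphi(ax) = \varphi(a)\varphi(x) = b\varphi(x)$ for every $x \in A$ gives $\varphi \circ \lt_a = \lt_b \circ \varphi$, so $\lt_b = \varphi \circ \lt_a \circ \varphi^{-1}$ in $\gl_\R(B)$. Applying the same argument to $\varphi(xa) = \varphi(x)b$ yields $\rt_b = \varphi \circ \rt_a \circ \varphi^{-1}$.

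Since the determinant is conjugation invariant, we obtain $\det(\lt_b) = \det(\lt_a)$ and $\det(\rt_b) = \det(\rt_a)$, whence $s(\lt_b) = s(\lt_a)$ and $s(\rt_b) = s(\rt_a)$. By Proposition \ref{constant} these equalities evaluate to $\ell(B) = \ell(A)$ and $r(B) = r(A)$, so $p(A) = p(B)$, as required.

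There is essentially no obstacle in this argument. The only technical point worth registering is that $A$ and $B$ are a priori distinct vector spaces, so $\det(\lt_a)$ and $\det(\lt_b)$ are defined relative to (possibly different) bases; however, the conjugation identity $\lt_b = \varphi \,\lt_a\, \varphi^{-1}$, combined with the basis-free characterization of the determinant via the top exterior power (or, equivalently, by transporting a basis of $A$ to $B$ along $\varphi$), forces the two determinants to agree, which is all that the argument requires.
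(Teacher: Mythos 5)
Your proof is correct and follows the same route as the paper: conjugate $\lt_a$ and $\rt_a$ by the isomorphism $\varphi$ to get $\lt_b = \varphi\,\lt_a\,\varphi^{-1}$ and $\rt_b = \varphi\,\rt_a\,\varphi^{-1}$, then use conjugation invariance of the determinant. Your closing remark on basis-independence is a reasonable extra precaution but adds nothing the paper's argument lacks.
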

\begin{proof}
Let $\varphi: A \to B$ be an isomorphism in $\mathscr{D}$. Choose $a \in A \setminus \{ 0 \}$ and set $b = \varphi(a)$. Then 
$\lt_b = \varphi \lt_a \varphi^{-1}$ implies $\det(\lt_b) = \det(\lt_a)$, whence $\ell(B) = \ell(b) = \mbox{sign}(\det(\lt_b)) = \mbox{sign}(\det(\lt_a)) = \ell(a) = \ell(A)$. 
Like\-wise, $\rt_b = \varphi \rt_a \varphi^{-1}$ implies $r(B) = r(A)$.  
\end{proof}
\noindent
Together the above two propositions assert that the {\it sign pair map} 
\[ p: \mathscr{D} \setminus [\R] \to {\rm C}_2 \times {\rm C}_2,\ p(A) = (\ell(A),r(A)) \] 
is well defined, and constant on all isoclasses. For each $n \in \{2,4,8\}$ we denote its restriction to $ \mathscr{D}_n$ 
by $p_n: \mathscr{D}_n \to {\rm C}_2 \times {\rm C}_2$. For each $(\alpha,\beta) \in  {\rm C}_2 \times {\rm C}_2$, the 
fibre $p_n^{-1}(\alpha,\beta)$ forms a full subcategory $\mathscr{D}_n^{\alpha,\beta}$ of $\mathscr{D}_n$. 
Usually we prefer the more intuitive notation 
\[ \mathscr{D}_n^{++} = \mathscr{D}_n^{1,1},\ \mathscr{D}_n^{+-} = \mathscr{D}_n^{1,-1},\ \mathscr{D}_n^{-+} = \mathscr{D}_n^{-1,1}, \quad\mbox{and}\quad \mathscr{D}_n^{--} 
= \mathscr{D}_n^{-1,-1}. \] 
\begin{pro}\label{Ddecomp}
For each $n \in \{2,4,8\}$, the category $\mathscr{D}_n$ decomposes in accordance with 
\[ \mathscr{D}_n = \coprod_{(\alpha,\beta) \in  {\rm C}_2 \times {\rm C}_2} \mathscr{D}_n^{\alpha,\beta}. \]
\end{pro}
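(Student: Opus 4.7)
The plan is to verify the two defining properties of a coproduct (disjoint union) decomposition of a category: (i) every object of $\mathscr{D}_n$ lies in exactly one of the four full subcategories $\mathscr{D}_n^{\alpha,\beta}$, and (ii) there are no morphisms between objects lying in distinct subcategories.

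For (i), I would invoke Proposition \ref{constant}. Given $A \in \mathscr{D}_n$, since $n \in \{2,4,8\}$ we have $\dim(A) > 1$, so both $\ell : A \setminus \{0\} \to \mathrm{C}_2$ and $r : A \setminus \{0\} \to \mathrm{C}_2$ are constant, yielding well-defined values $\ell(A), r(A) \in \mathrm{C}_2$ and hence a unique sign pair $p_n(A) = (\ell(A), r(A))$. Thus $A$ belongs to the fibre $\mathscr{D}_n^{p_n(A)}$ and to no other, since the four possible sign pairs are pairwise distinct.

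For (ii), the key observation is that every morphism in $\mathscr{D}_n$ is bijective, as noted in the introduction. So any morphism $\varphi : A \to B$ with $A, B \in \mathscr{D}_n$ is an isomorphism, and Proposition \ref{isomsign} then gives $p(A) = p(B)$. Consequently, $A$ and $B$ lie in the same fibre, so there are no morphisms across distinct subcategories $\mathscr{D}_n^{\alpha,\beta}$.

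There is no real obstacle here: the proposition is essentially a formal assembly of Propositions \ref{constant} and \ref{isomsign} together with the fact that morphisms in $\mathscr{D}_n$ are bijective. The only subtlety to flag is that the claim is purely a decomposition statement and does not assert non-emptiness of the four blocks — that each $\mathscr{D}_n^{\alpha,\beta}$ is non-empty is a separate matter, to be handled subsequently in the paper.
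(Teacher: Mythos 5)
Your proposal is correct and follows essentially the same route as the paper's own proof: partition the objects by the fibres of the (well-defined, by Proposition \ref{constant}) sign pair map, then use the bijectivity of morphisms in $\mathscr{D}_n$ together with Proposition \ref{isomsign} to rule out morphisms between distinct fibres. Your closing remark about non-emptiness being a separate matter is a reasonable observation but not needed for the statement as given.
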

\begin{proof}
The object class of $\mathscr{D}_n$ is the disjoint union of the fibres of $p_n$, i.e.~the disjoint union of the object 
classes of $\mathscr{D}_n^{\alpha,\beta}$, where $(\alpha,\beta) \in  {\rm C}_2 \times {\rm C}_2$.

Every morphism $\varphi: A \to B$ in $\mathscr{D}_n$ is an isomorphism, and \mbox{$A \in \mathscr{D}_n^{\alpha,\beta}$} and 
$B \in \mathscr{D}_n^{\gamma,\delta}$ for $(\alpha,\beta) = p_n(A)$ and $(\gamma,\delta) = p_n(B)$. Proposition~\ref{isomsign} implies that $(\alpha,\beta) = (\gamma,\delta)$, 
whence $\varphi$ is a morphism in $\mathscr{D}_n^{\alpha,\beta}$.
\end{proof}

\section{Relationships between the blocks $\mathscr{D}_n^{\alpha,\beta}$ of $\mathscr{D}_n$}

The purpose of this section is to exhibit two types of relationships between the four blocks $\mathscr{D}_n^{\alpha,\beta}$ of $\mathscr{D}_n$. The first of these is provided 
by opposition and described in the following proposition, the proof of which is straightforward. 
\begin{pro}
(i) The passage from an algebra $A \in \mathscr{D}$ to its opposite algebra $A^{\rm op}$ yields an endofunctor $\mathscr{O}: \mathscr{D} \to \mathscr{D}$, defined 
on morphisms\linebreak $\varphi: A \to B$ by $\mathscr{O}(\varphi) = \varphi$.
\\[1ex] 
(ii) The endofunctor $\mathscr{O}$ is a self-inverse automorphism of $\mathscr{D}$. It induces for all $n \in \{2,4,8\}$ and $(\alpha,\beta) \in {\rm C}_2 \times {\rm C}_2$ 
an isomorphism of blocks
\[ \mathscr{O}_n^{\alpha,\beta}:\ \mathscr{D}_n^{\alpha,\beta} \to \mathscr{D}_n^{\beta,\alpha}\ , \]
with inverse isomorphism $\mathscr{O}_n^{\beta,\alpha}$.
\end{pro}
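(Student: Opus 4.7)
The plan is to verify a short list of items: (a) $A^{\rm op}$ lies in $\mathscr{D}$ whenever $A$ does; (b) any morphism $\varphi:A\to B$ in $\mathscr{D}$ is also a morphism $A^{\rm op}\to B^{\rm op}$ via the same underlying set-map; (c) these assignments assemble into an endofunctor; (d) $\mathscr{O}$ is strictly self-inverse; and (e) opposition interchanges the two components of the sign pair. Everything will hinge on the basic observation that for any $a\in A$, left multiplication by $a$ in $A^{\rm op}$ equals $\rt_a$ in $A$, while right multiplication by $a$ in $A^{\rm op}$ equals $\lt_a$ in $A$.

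For part~(i), item~(a) is immediate from this observation: bijectivity of $\lt_a$ and $\rt_a$ for $a\neq 0$ transports to bijectivity of the corresponding operators in $A^{\rm op}$, while nonzeroness and finite-dimensionality are untouched. For (b) I would record the short calculation, using the opposite product $x\ast y = yx$, that $\varphi(x\ast y)=\varphi(yx)=\varphi(y)\varphi(x)=\varphi(x)\ast\varphi(y)$, where the last product is in $B^{\rm op}$; nonzeroness of $\varphi$ is preserved. Item~(c) is automatic because $\mathscr{O}$ acts as the identity on underlying linear maps and therefore preserves composition and identity morphisms on the nose.

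For part~(ii), $(A^{\rm op})^{\rm op}=A$ holds strictly and $\mathscr{O}(\mathscr{O}(\varphi))=\varphi$, so $\mathscr{O}\circ\mathscr{O}=\Id_{\mathscr{D}}$, making $\mathscr{O}$ a self-inverse automorphism. For the block assertion, applying the generalised sign map $s$ to the operator identifications noted above yields $\ell(A^{\rm op})=r(A)$ and $r(A^{\rm op})=\ell(A)$, hence $p(A^{\rm op})=(\beta,\alpha)$ whenever $p(A)=(\alpha,\beta)$. Since $\mathscr{O}$ also preserves dimension, it restricts to a functor $\mathscr{O}_n^{\alpha,\beta}:\mathscr{D}_n^{\alpha,\beta}\to\mathscr{D}_n^{\beta,\alpha}$, and the strict relation $\mathscr{O}^2=\Id_{\mathscr{D}}$ forces $\mathscr{O}_n^{\alpha,\beta}$ and $\mathscr{O}_n^{\beta,\alpha}$ to be mutually inverse isomorphisms of categories.

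I anticipate no real obstacle, in accordance with the authors' remark that the proof is straightforward. The only point that demands care is notational bookkeeping: because the underlying vector spaces and the morphism set-maps are unchanged by opposition, one must indicate clearly at each step in which of the two algebras a given product is being formed, so that the identifications $\lt_a^{A^{\rm op}}=\rt_a^A$ and $\rt_a^{A^{\rm op}}=\lt_a^A$ are applied unambiguously.
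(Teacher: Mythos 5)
Your proposal is correct and is exactly the straightforward argument the paper alludes to without writing out: the paper omits the proof entirely, and your verification rests on the same key identities $\lt_a^{A^{\rm op}}=\rt_a^{A}$ and $\rt_a^{A^{\rm op}}=\lt_a^{A}$, which give $p(A^{\rm op})=(\beta,\alpha)$ and hence the block isomorphisms. No gaps.
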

\noindent
The second type of relationship lies deeper. For any fixed $n\in\{2,4,8\}$ and $m\in\mathbb{N}$ odd with $m<n$ we 
introduce the category 
$\mathscr{D}_{mn}$, whose objects are triples $(A,U,V)$ formed by an algebra $A \in \mathscr{D}_n$ and supplementary 
subspaces $U,V \subset A$ of dimensions $m$ and $n-m$ respectively. A
morphism \mbox{$\varphi:(A,U,V) \to (A',U',V')$} in $\mathscr{D}_{mn}$ is a morphism 
$\varphi:A\to A'$ in $\mathscr{D}_n$ satisfying $\varphi(U)=U'$ and $\varphi(V)=V'$.

The relevance of $\mathscr{D}_{mn}$ to $\mathscr{D}_n$ is explained by the forgetful functor\linebreak 
$\mathscr{F}: \mathscr{D}_{mn} \to \mathscr{D}_n$, defined on objects 
by $\mathscr{F}(A,U,V) = A$ and on morphisms by $\mathscr{F}(\varphi) = \varphi$. This functor $\mathscr{F}$ is faithful
and dense. 
For all $(\alpha,\beta) \in {\rm C}_2 \times {\rm C}_2$ we denote by $\mathscr{D}_{mn}^{\alpha,\beta}$ the full 
subcategory of $\mathscr{D}_{mn}$ that is  formed by all 
objects $(A,U,V) \in \mathscr{D}_{mn}$ with $A \in \mathscr{D}_n^{\alpha,\beta}$. 
Now $\mathscr{F}$ induces forgetful functors $\mathscr{F}^{\alpha,\beta}:\mathscr{D}_{mn}^{\alpha,\beta}\to
\mathscr{D}_n^{\alpha,\beta}$, which retain the property of being faithful and dense. 
For the remainder of this section we shift our focus from
$\mathscr{D}_n^{\alpha,\beta}$ to $\mathscr{D}_{mn}^{\alpha,\beta}$, ultimately proving
that, for fixed $(m,n)$ and varying $(\alpha,\beta)$, all categories $\mathscr{D}_{mn}^{\alpha,\beta}$ are isomorphic 
(Corollary~\ref{blocksDmn}). 

To begin with, recall that the {\it isotope} of a $k$-algebra $A$ with respect to 
$(\sigma,\tau) \in \gl_k(A) \times \gl_k(A)$ is the $k$-algebra $A_{\sigma,\tau}$ 
with underlying vector space $A$, and multiplication $x \circ y = \sigma(x)\tau(y)$. Thus 
$(A_{\sigma,\tau})_{\sigma',\tau'} = A_{\sigma\sigma',\tau\tau'}$ holds for all 
$(\sigma,\tau), (\sigma',\tau') \in \gl_k(A) \times \gl_k(A)$. Also, every element $a \in A \setminus \{0\}$ 
determines $k$-linear operators 
$\lt_a^\circ: A_{\sigma,\tau} \to A_{\sigma,\tau},\ \lt_a^\circ (x) = a \circ x$ and 
$\rt_a^\circ: A_{\sigma,\tau} \to A_{\sigma,\tau},\ \rt_a^\circ (x) =  x \circ a$ which, by 
definition of isotopy, satisfy the identities $ \lt_a^\circ =  \lt_{\sigma(a)}\tau$ and $\rt_a^\circ = \rt_{\tau(a)}\sigma$. 
It follows that $A_{\sigma,\tau}$ is a division algebra if so is $A$.  
\begin{lem} \label{isotopisign}
Let $A \in \mathscr{D}_n$ for some $n \in \{ 2,4,8 \}$, and let $(\sigma,\tau) \in (\gl_\R(A))^2$.
If $p_n(A) = (\alpha,\beta)$, then $p_n(A_{\sigma,\tau}) = (\alpha, \beta)(s(\tau), s(\sigma))$.  
\end{lem}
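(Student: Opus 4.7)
The plan is to apply the generalized sign map $s$ to the identities $\lt_a^\circ = \lt_{\sigma(a)}\tau$ and $\rt_a^\circ = \rt_{\tau(a)}\sigma$ recalled immediately before the lemma, and then to exploit the fact that $s$ is a group homomorphism together with Proposition~\ref{constant}. Since $A_{\sigma,\tau}$ is again a real division algebra of dimension $n \in \{2,4,8\}$, its sign pair $p_n(A_{\sigma,\tau})$ is well defined and may be read off from any single nonzero element, so the lemma reduces to a single symbolic computation.

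First I pick an arbitrary $a \in A \setminus \{0\}$ and observe that $\sigma(a), \tau(a) \in A \setminus \{0\}$, because $\sigma$ and $\tau$ are bijective. The hypothesis $p_n(A) = (\alpha,\beta)$ together with Proposition~\ref{constant} then gives $\ell(\sigma(a)) = \alpha$ and $r(\tau(a)) = \beta$, independent of the choice of $a$. Applying the homomorphism $s$ to the two identities above yields
\[ \ell(A_{\sigma,\tau}) = s(\lt_a^\circ) = s(\lt_{\sigma(a)})\, s(\tau) = \alpha\, s(\tau), \]
and analogously $r(A_{\sigma,\tau}) = s(\rt_{\tau(a)})\, s(\sigma) = \beta\, s(\sigma)$. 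Assembling the two components inside ${\rm C}_2 \times {\rm C}_2$ then produces
\[ p_n(A_{\sigma,\tau}) = (\alpha\, s(\tau),\, \beta\, s(\sigma)) = (\alpha,\beta)(s(\tau),s(\sigma)), \]
which is the desired equality.

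There is essentially no obstacle here; the whole argument amounts to a direct bookkeeping exercise with the sign homomorphism and the pre-computed expressions for $\lt_a^\circ$ and $\rt_a^\circ$. The only point worth flagging is the asymmetric crossover between the two coordinates: the left sign of the isotope is twisted by $s(\tau)$ rather than by $s(\sigma)$, and the right sign by $s(\sigma)$ rather than by $s(\tau)$. This is dictated by the order in which $\sigma$ and $\tau$ appear in the isotope product $x \circ y = \sigma(x)\tau(y)$, and it accounts for the swap visible in the statement of the lemma.
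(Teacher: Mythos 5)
Your proof is correct and follows exactly the same route as the paper's: apply the sign homomorphism $s$ to the identities $\lt_a^\circ = \lt_{\sigma(a)}\tau$ and $\rt_a^\circ = \rt_{\tau(a)}\sigma$, and use the constancy of $\ell$ and $r$ from Proposition~\ref{constant} to identify $s(\lt_{\sigma(a)}) = \alpha$ and $s(\rt_{\tau(a)}) = \beta$. No gaps.
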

\begin{proof}
Choosing any element $a \in A \setminus \{0\}$, we have that 
\[ \ell(A_{\sigma,\tau}) = s(\lt_a^\circ) = s(\lt_{\sigma(a)}\tau) = s(\lt_{\sigma(a)})s(\tau) = \ell(A)s(\tau) = 
\alpha s(\tau)  \]
and likewise
\[ r(A_{\sigma,\tau}) = s(\rt_a^\circ) = s(\rt_{\tau(a)}\sigma) = s(\rt_{\tau(a)})s(\sigma) = r(A)s(\sigma) = 
\beta s(\sigma). \]   
\end{proof}
\noindent
Every object $X = (A,U,V) \in \mathscr{D}_{mn}$ determines an invertible linear operator 
$\kappa=\kappa_X \in \gl_\R(A)$, defined by $\kappa(u + v) =u - v$, where $u\in U$ and 
$v\in V$.
\begin{lem}\label{konjDmn}
If $\varphi: X \to X'$ is a morphism in $\mathscr{D}_{mn}$, then $\varphi\kappa = \kappa'\varphi$.
\end{lem}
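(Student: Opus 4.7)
The plan is to verify the identity $\varphi \kappa = \kappa' \varphi$ pointwise by decomposing elements of $A$ with respect to the direct sum $A = U \oplus V$, which is guaranteed by the hypothesis that $U$ and $V$ are supplementary subspaces of $A$. Write $X = (A,U,V)$ and $X' = (A',U',V')$, so that $\kappa = \kappa_X$ is defined by $\kappa(u+v) = u - v$ for $u \in U$, $v \in V$, and $\kappa' = \kappa_{X'}$ is defined analogously with respect to the decomposition $A' = U' \oplus V'$.

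Given any $a \in A$, I would first decompose $a = u + v$ with $u \in U$ and $v \in V$, and then compute both sides. On one hand, linearity of $\varphi$ gives
\[ \varphi(\kappa(a)) = \varphi(u - v) = \varphi(u) - \varphi(v). \]
On the other hand, $\varphi(a) = \varphi(u) + \varphi(v)$, and here one invokes the defining property of a morphism in $\mathscr{D}_{mn}$: the conditions $\varphi(U) = U'$ and $\varphi(V) = V'$ force $\varphi(u) \in U'$ and $\varphi(v) \in V'$. Hence $\varphi(u) + \varphi(v)$ is the canonical decomposition of $\varphi(a)$ relative to $A' = U' \oplus V'$, and applying $\kappa'$ yields
\[ \kappa'(\varphi(a)) = \varphi(u) - \varphi(v). \]
Comparing the two expressions gives $\varphi\kappa(a) = \kappa'\varphi(a)$, and since $a$ was arbitrary the operator identity $\varphi\kappa = \kappa'\varphi$ follows.

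There is no real obstacle here: the lemma is a direct unfolding of definitions. The only step that deserves comment is the use of $\varphi(U) = U'$ and $\varphi(V) = V'$ to identify the components of $\varphi(a)$ in the decomposition of $A'$; this rests on the fact that supplementary subspaces provide a \emph{unique} decomposition of each vector, so once we exhibit $\varphi(u) + \varphi(v)$ as a sum of an element of $U'$ and an element of $V'$, it must be \emph{the} decomposition used by $\kappa'$.
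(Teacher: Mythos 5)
Your proof is correct and follows exactly the same route as the paper's: decompose $x = u + v$ with $u \in U$, $v \in V$, use linearity of $\varphi$ together with $\varphi(U) = U'$ and $\varphi(V) = V'$ to identify $\varphi(u) + \varphi(v)$ as the canonical decomposition of $\varphi(x)$ in $A' = U' \oplus V'$, and compare both sides. Your explicit remark on uniqueness of the decomposition is the one point the paper leaves implicit, but the argument is the same.
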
  
\begin{proof}
Let $X = (A,U,V)$ and $X' = (A',U',V')$. Every $x \in A$ has a unique decomposition $x = u + v$, with
$u\in U$ and $v \in V$. Since $\varphi(u) \in U'$ and $\varphi(v) \in V'$, we obtain
\begin{eqnarray*} \varphi\kappa(x) & = & \varphi\kappa(u + v) = \varphi(u - v) = \varphi(u) - \varphi(v) \\
 & = & \kappa'(\varphi(u) + \varphi(v)) = \kappa'\varphi(u + v) = \kappa'\varphi(x).
\end{eqnarray*}
\end{proof}
\begin{theo}\label{theo}
(i) For each $(i,j) \in \{0,1\}^2$, the passage from $X = (A,U,V)$\linebreak in $\mathscr{D}_{mn}$ to 
$\mathscr{I}_{ij}(X) = (A_{\kappa^i,\kappa^j},U,V)$ yields an 
endofunctor $\mathscr{I}_{ij}: \mathscr{D}_{mn} \to \mathscr{D}_{mn}$, defined
on morphisms $\varphi: X \to X'$ by $\mathscr{I}_{ij}(\varphi) = \varphi$.
\\[1ex] 
(ii) The set of endofunctors $\mathscr{I} = \{ \mathscr{I}_{ij}\ |\ (i,j) \in \{0,1\}^2 \}$ forms a group under 
composition, and the map
\[ {\rm C}_2 \times {\rm C}_2 \to \mathscr{I},\ ((-1)^i,(-1)^j) \mapsto \mathscr{I}_{ij} \]
is a group isomorphism.
\\[1ex]
(iii) Each of the four automorphisms 
$\mathscr{I}_{ij}: \mathscr{D}_{mn} \to \mathscr{D}_{mn}$ induces isomorphisms of blocks
\[ 
\mathscr{I}_{ij}^{\alpha,\beta}:\mathscr{D}_{mn}^{\alpha,\beta} \to \mathscr{D}_{mn}^{(-1)^j\alpha,(-1)^i\beta}\ , \]
for all $(\alpha,\beta) \in {\rm C}_2 \times {\rm C}_2$.
\end{theo}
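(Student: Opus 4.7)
The plan is to prove the three parts of Theorem~\ref{theo} in sequence, drawing on Lemmas~\ref{konjDmn} and~\ref{isotopisign} together with the observation $\kappa^2 = \Id$.

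For part~(i), I would first check that $\mathscr{I}_{ij}(X) = (A_{\kappa^i,\kappa^j}, U, V)$ really lies in $\mathscr{D}_{mn}$: since $A \in \mathscr{D}_n$ and $(\kappa^i,\kappa^j) \in (\gl_\R(A))^2$, the discussion preceding Lemma~\ref{isotopisign} guarantees that $A_{\kappa^i,\kappa^j}$ is again an $n$-dimensional real division algebra, while $U \oplus V = A$ and the dimensions $\dim U = m$, $\dim V = n-m$ are unchanged. For morphisms, given $\varphi: X \to X'$ in $\mathscr{D}_{mn}$, I must verify that $\varphi$ respects the new multiplications $x \circ y = \kappa^i(x)\kappa^j(y)$ on $A$ and $x \circ' y = \kappa'^i(x)\kappa'^j(y)$ on $A'$. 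Iterating Lemma~\ref{konjDmn} gives $\varphi\kappa^i = \kappa'^i\varphi$ and $\varphi\kappa^j = \kappa'^j\varphi$, so
\[ \varphi(x\circ y) = \varphi(\kappa^i(x))\varphi(\kappa^j(y)) = \kappa'^i(\varphi(x))\,\kappa'^j(\varphi(y)) = \varphi(x)\circ'\varphi(y). \]
Preservation of $U, V$ is inherited directly from $\varphi$, and functoriality is automatic because $\mathscr{I}_{ij}$ acts as the identity on morphisms.

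Part~(ii) rests on $\kappa^2 = \Id$, which is immediate from $\kappa|_U = \Id_U$ and $\kappa|_V = -\Id_V$. Thus $\kappa^i$ depends only on $i \bmod 2$, and the identity $(A_{\sigma,\tau})_{\sigma',\tau'} = A_{\sigma\sigma',\tau\tau'}$ quoted in the paper yields $\mathscr{I}_{ij}\circ\mathscr{I}_{i'j'} = \mathscr{I}_{i+i'\bmod 2,\,j+j'\bmod 2}$. This is the group law on ${\rm C}_2\times{\rm C}_2$ transported along $((-1)^i,(-1)^j)\mapsto\mathscr{I}_{ij}$, so the displayed map is a group isomorphism.

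For part~(iii) the decisive numerical input is $s(\kappa) = -1$. In any basis adapted to $A = U \oplus V$, the matrix of $\kappa$ is $\Id_m \oplus (-\Id_{n-m})$, so $\det(\kappa) = (-1)^{n-m}$; since $m$ is odd while $n \in \{2,4,8\}$ is even, $n-m$ is odd and $s(\kappa) = -1$, hence $s(\kappa^i) = (-1)^i$. Applying Lemma~\ref{isotopisign} with $(\sigma,\tau) = (\kappa^i,\kappa^j)$ yields
\[ p_n(A_{\kappa^i,\kappa^j}) = (\alpha,\beta)(s(\kappa^j),s(\kappa^i)) = ((-1)^j\alpha,(-1)^i\beta), \]
so $\mathscr{I}_{ij}$ restricts to a functor $\mathscr{D}_{mn}^{\alpha,\beta} \to \mathscr{D}_{mn}^{(-1)^j\alpha,(-1)^i\beta}$. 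By part~(ii), each $\mathscr{I}_{ij}$ is its own inverse, so this restriction together with the analogous restriction from the target block provides the claimed block isomorphism $\mathscr{I}_{ij}^{\alpha,\beta}$. The main obstacle is the morphism check in part~(i): everything else reduces either to the relation $\kappa^2 = \Id$ or to the single parity computation $\det(\kappa) = (-1)^{n-m} = -1$, whereas showing that $\varphi$ survives simultaneous isotopy of source and target genuinely requires the compatibility $\varphi\kappa = \kappa'\varphi$ from Lemma~\ref{konjDmn}, and is the reason the whole framework has to be set up in $\mathscr{D}_{mn}$ rather than in $\mathscr{D}_n$ itself.
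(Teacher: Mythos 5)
Your proposal is correct and follows essentially the same route as the paper: part (i) via Lemma~\ref{konjDmn} (you merely write out the multiplicativity check that the paper leaves implicit), part (ii) via $\kappa^2=\Id$ and the isotope composition rule, and part (iii) via the computation $s(\kappa)=\mbox{sign}((-1)^{n-m})=-1$ fed into Lemma~\ref{isotopisign}, with self-inverseness supplying the inverse block functor. The only cosmetic omission is that, like the paper, you should note the four functors $\mathscr{I}_{ij}$ are pairwise distinct (e.g.\ because by (iii) they act differently on the blocks) so that the surjective homomorphism in (ii) is actually an isomorphism.
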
\label{Dmnthm}
\begin{proof} 
(i) If $X \in \mathscr{D}_{mn}$, then $\mathscr{I}_{ij}(X) \in \mathscr{D}_{mn}$. If $\varphi: X \to X'$ is a morphism in 
$\mathscr{D}_{mn}$, then also $\varphi: \mathscr{I}_{ij}(X) \to \mathscr{I}_{ij}(X')$ is a morphism in $\mathscr{D}_{mn}$, 
by Lemma \ref{konjDmn}.
\\[1ex] 
(ii) Denoting the identity functor on $\mathscr{D}_{mn}$ by ${\rm Id}$, we have the equalities 
\[ {\rm Id} = \mathscr{I}_{00} = \mathscr{I}_{10}^2 = \mathscr{I}_{01}^2 \quad \mbox{and} \quad \mathscr{I}_{10}\mathscr{I}_{01} = \mathscr{I}_{11} = 
\mathscr{I}_{01}\mathscr{I}_{10}, \]
while $\mathscr{I}_{10} \not= {\rm Id},\ \mathscr{I}_{01} \not= {\rm Id}$, and $\mathscr{I}_{11} \not={\rm Id}$.
\\[1ex]  
(iii) Given $(m,n),\ (i,j)$ and $(\alpha,\beta)$ as in the statement, let \vspace{1ex} $X = (A,U,V)$ be an object in 
$\mathscr{D}_{mn}^{\alpha,\beta}$. Observing that
\[ s(\kappa) = \mbox{sign}(\det(\kappa)) = \mbox{sign}((-1)^{n-m}) = \mbox{sign}(-1) = -1, \] 
we obtain with Lemma~\ref{isotopisign} that 
\[ p_n(A_{\kappa^i,\kappa^j}) = (\alpha,\beta)(s(\kappa^j),s(\kappa^i)) = (\alpha,\beta)((-1)^j,(-1)^i), \] 
which means that $\mathscr{I}_{ij}(X) \in
\mathscr{D}_{mn}^{(-1)^j\alpha,(-1)^i\beta}$. Hence the automorphism \linebreak
$\mathscr{I}_{ij}: \mathscr{D}_{mn} \to \mathscr{D}_{mn}$ induces a functor 
\[ \mathscr{I}_{ij}^{\alpha,\beta}:\mathscr{D}_{mn}^{\alpha,\beta} \to \mathscr{D}_{mn}^{(-1)^j\alpha,(-1)^i\beta}, \]
which in fact is an isomorphism with inverse functor 
\[ \mathscr{I}_{ij}^{(-1)^j\alpha,(-1)^i\beta}:\mathscr{D}_{mn}^{(-1)^j\alpha,(-1)^i\beta} \to 
\mathscr{D}_{mn}^{\alpha,\beta}. \] 
\end{proof}
\begin{cor}\label{blocksDmn} 
For each $n \in \{2,4,8\}$ and $m \in \N$ odd with $m < n$, the category $\mathscr{D}_{mn}$ decomposes in accordance with 
\[ \mathscr{D}_{mn} = \coprod_{(\alpha,\beta) \in  {\rm C}_2 \times {\rm C}_2} \mathscr{D}_{mn}^{\alpha,\beta}, \]
and all its four blocks $\mathscr{D}_{mn}^{\alpha,\beta}$ are isomorphic. More precisely,
if $(\alpha,\beta), (\gamma,\delta) \in {\rm C}_2 \times {\rm C}_2$, then
$\mathscr{I}_{ij}^{\alpha,\beta}:\mathscr{D}_{mn}^{\alpha,\beta} \to \mathscr{D}_{mn}^{\gamma,\delta}$ 
is an isomorphism for the unique pair $(i,j) \in \{0,1\}^2$ satisfying 
$((-1)^i,(-1)^j) = (\delta\beta,\gamma\alpha)$.
\end{cor}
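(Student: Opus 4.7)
The plan is to deduce both claims almost directly from Theorem~\ref{theo} together with Proposition~\ref{isomsign}, organised into two short steps.

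For the decomposition, I would first observe that the object class of $\mathscr{D}_{mn}$ is, by definition, the disjoint union of the subclasses $\mathscr{D}_{mn}^{\alpha,\beta}$. What needs to be checked is that no morphism in $\mathscr{D}_{mn}$ bridges two distinct blocks. Any morphism $\varphi:(A,U,V)\to(A',U',V')$ in $\mathscr{D}_{mn}$ restricts to an isomorphism $A\to A'$ in $\mathscr{D}_n$, so Proposition~\ref{isomsign} yields $p_n(A)=p_n(A')$; hence $\varphi$ already lies inside a single $\mathscr{D}_{mn}^{\alpha,\beta}$, and the coproduct decomposition follows. (Equivalently, the decomposition is transported along the faithful, dense forgetful functor $\mathscr{F}$ from the decomposition of $\mathscr{D}_n$ provided by Proposition~\ref{Ddecomp}.)

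For the block isomorphism assertion, Theorem~\ref{theo}(iii) already supplies, for each $(i,j)\in\{0,1\}^2$ and each $(\alpha,\beta)\in{\rm C}_2\times{\rm C}_2$, an isomorphism
\[ \mathscr{I}_{ij}^{\alpha,\beta}:\mathscr{D}_{mn}^{\alpha,\beta}\to\mathscr{D}_{mn}^{(-1)^j\alpha,(-1)^i\beta}. \]
Given a target pair $(\gamma,\delta)\in{\rm C}_2\times{\rm C}_2$, I would then solve the system $(-1)^j\alpha=\gamma$ and $(-1)^i\beta=\delta$. Since every element of ${\rm C}_2$ is self-inverse, this rearranges uniquely to $((-1)^i,(-1)^j)=(\delta\beta,\gamma\alpha)$, pinpointing the unique $(i,j)\in\{0,1\}^2$ for which the source of $\mathscr{I}_{ij}^{\alpha,\beta}$ is $\mathscr{D}_{mn}^{\alpha,\beta}$ and its target is $\mathscr{D}_{mn}^{\gamma,\delta}$.

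No serious obstacle is anticipated: the corollary is essentially bookkeeping, with all substantive content already absorbed into Lemmas~\ref{isotopisign} and~\ref{konjDmn} and into Theorem~\ref{theo}. The only points meriting care are the appeal to Proposition~\ref{isomsign} that forces morphisms of $\mathscr{D}_{mn}$ to respect the sign pair in the first step, and the elementary inversion of the system in ${\rm C}_2\times{\rm C}_2$ that identifies the unique $(i,j)$ in the second.
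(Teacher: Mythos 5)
Your proposal is correct and follows essentially the same route as the paper: the decomposition is obtained from Proposition~\ref{Ddecomp} (your direct check via Proposition~\ref{isomsign} is just that argument unwound), and the block isomorphisms come from Theorem~\ref{theo}(iii) by solving $((-1)^j\alpha,(-1)^i\beta)=(\gamma,\delta)$, using that elements of ${\rm C}_2$ are self-inverse. Nothing is missing.
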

\begin{proof} 
The stated decomposition of $\mathscr{D}_{mn}$ is an immediate consequence of
Proposition~\ref{Ddecomp}. Moreover, $\mathscr{I}_{ij}^{\alpha,\beta}:\mathscr{D}_{mn}^{\alpha,\beta} \to 
\mathscr{D}_{mn}^{(-1)^j\alpha,(-1)^i\beta}$ is an isomorphism of categories by Theorem \ref{theo}(iii), and 
$\mathscr{D}_{mn}^{(-1)^j\alpha,(-1)^i\beta} = \mathscr{D}_{mn}^{\gamma,\delta}$ 
provided that $((-1)^i,(-1)^j) = (\delta\beta^{-1},\gamma\alpha^{-1})$.
\end{proof}
\noindent
Composing the forgetful functors $\mathscr{F}^{\alpha,\beta}$ with the isomorphisms
$\mathscr{I}_{ij}^{\alpha,\beta}$, we arrive at the announced second type of
relationship between the blocks $\mathscr{D}_n^{\alpha,\beta}$ of $\mathscr{D}_n$.
\begin{cor} \label{forgetfulDmn}
Let $n \in \{2,4,8\}$ and $m \in \N$ odd with $m < n$. Given $(\alpha,\beta), (\gamma,\delta) \in {\rm C}_2 \times 
{\rm C}_2$, let $(i,j) \in \{0,1\}^2$ be the unique pair satisfying 
$((-1)^i,(-1)^j) = (\delta\beta,\gamma\alpha)$. Then the blocks $\mathscr{D}_n^{\alpha,\beta}$ and $\mathscr{D}_n^{\gamma,\delta}$ of $\mathscr{D}_n$ are related by the diagram
\[ \begin{array}{ccccc}  & & \mathscr{I}_{ij}^{\alpha,\beta} & & \\
 & \mathscr{D}_{mn}^{\alpha,\beta} & \longrightarrow & \mathscr{D}_{mn}^{\gamma,\delta} & \\
\mathscr{F}^{\alpha,\beta} & \downarrow & & \downarrow & \mathscr{F}^{\gamma,\delta} \\
 & \mathscr{D}_n^{\alpha,\beta} & & \mathscr{D}_n^{\gamma,\delta} &  
\end{array} \]
where the isotopy functor
$\mathscr{I}_{ij}^{\alpha,\beta}$ is an isomorphism of categories, while the forgetful
functors $\mathscr{F}^{\alpha,\beta}$ and $\mathscr{F}^{\gamma,\delta}$ are faithful and
dense.
\end{cor}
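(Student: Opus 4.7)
The plan is to assemble Corollary~\ref{forgetfulDmn} directly from the pieces already assembled, since at this point in the paper the statement is essentially a bookkeeping corollary rather than a new result. I would begin by observing that the existence and uniqueness of the pair $(i,j) \in \{0,1\}^2$ with $((-1)^i,(-1)^j) = (\delta\beta,\gamma\alpha)$ is immediate: both $\delta\beta$ and $\gamma\alpha$ lie in $\mathrm{C}_2 = \{1,-1\}$, and the map $(i,j) \mapsto ((-1)^i,(-1)^j)$ is a bijection $\{0,1\}^2 \to \mathrm{C}_2 \times \mathrm{C}_2$.

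Next I would invoke Corollary~\ref{blocksDmn} to conclude that $\mathscr{I}_{ij}^{\alpha,\beta}$ is an isomorphism of categories from $\mathscr{D}_{mn}^{\alpha,\beta}$ onto $\mathscr{D}_{mn}^{\gamma,\delta}$. This is the top horizontal arrow of the diagram. Here the indexing convention established earlier (that the unique pair $(i,j)$ satisfies precisely $((-1)^i,(-1)^j) = (\delta\beta^{-1},\gamma\alpha^{-1}) = (\delta\beta,\gamma\alpha)$, using that $\beta^{-1} = \beta$ and $\alpha^{-1} = \alpha$ in $\mathrm{C}_2$) matches the hypothesis of the corollary being proved.

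For the vertical arrows, I would recall that the forgetful functor $\mathscr{F}: \mathscr{D}_{mn} \to \mathscr{D}_n$ was already observed to be faithful and dense: faithfulness is immediate from the definition $\mathscr{F}(\varphi) = \varphi$, and density follows because given any $A \in \mathscr{D}_n$, elementary linear algebra allows us to choose supplementary subspaces $U,V \subset A$ of dimensions $m$ and $n-m$, whence $\mathscr{F}(A,U,V) = A$. Since $\mathscr{F}$ restricts to the functors $\mathscr{F}^{\alpha,\beta}: \mathscr{D}_{mn}^{\alpha,\beta} \to \mathscr{D}_n^{\alpha,\beta}$ on full subcategories defined by the same sign pair on both sides, these restrictions inherit faithfulness automatically; density is likewise inherited because for $A \in \mathscr{D}_n^{\alpha,\beta}$ any triple $(A,U,V)$ still satisfies $A \in \mathscr{D}_n^{\alpha,\beta}$ and hence lies in $\mathscr{D}_{mn}^{\alpha,\beta}$.

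Finally, putting these three ingredients together yields the displayed diagram. I do not expect any real obstacle: the statement asserts the existence of the four functors with their stated properties, not commutativity of the diagram, so nothing beyond citing Corollary~\ref{blocksDmn} and the previously noted properties of $\mathscr{F}$ is required. The only minor subtlety worth stating explicitly is the verification that $\delta\beta^{-1} = \delta\beta$ and $\gamma\alpha^{-1} = \gamma\alpha$ in $\mathrm{C}_2$, which reconciles the index formula here with that of Corollary~\ref{blocksDmn}.
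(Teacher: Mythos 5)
Your proposal is correct and follows exactly the route the paper intends: the paper gives no separate proof of this corollary, presenting it as the immediate composite of Corollary~\ref{blocksDmn} (which already supplies the isomorphism $\mathscr{I}_{ij}^{\alpha,\beta}:\mathscr{D}_{mn}^{\alpha,\beta}\to\mathscr{D}_{mn}^{\gamma,\delta}$ for the same pair $(i,j)$) with the previously recorded fact that the restricted forgetful functors $\mathscr{F}^{\alpha,\beta}$ remain faithful and dense. Your explicit remarks on the bijection $(i,j)\mapsto((-1)^i,(-1)^j)$ and on $\beta^{-1}=\beta$ in ${\rm C}_2$ are correct and merely make the paper's implicit bookkeeping visible.
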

\noindent
For suitable full subcategories $\mathscr{C}_n \subset \mathscr{D}_n$, the diagram of Corollary~\ref{forgetfulDmn}
induces isomorphisms of blocks $\mathscr{C}_n^{\alpha,\beta}$ and $\mathscr{C}_n^{\gamma,\delta}$. This is discussed in 
the next section (Lemma~\ref{isoDmn}).

\section{Blocks of full subcategories of $\mathscr{D}_n$}

\subsection{Generalities}

Let $\mathscr{C} \subset \mathscr{D}$ be a full subcategory. For all $n \in \{ 2,4,8 \}$ and $(\alpha,\beta) \in 
{\rm C}_2 \times {\rm C}_2$ we denote by $\mathscr{C}_n$ and $\mathscr{C}_n^{\alpha,\beta}$ the full subcategories of 
$\mathscr{C}$ whose object classes are $\mathscr{C} \cap \mathscr{D}_n$ and $\mathscr{C} \cap 
\mathscr{D}_n^{\alpha,\beta}$ respectively. Now Proposition~\ref{Ddecomp} implies that 
\[ \mathscr{C}_n = \coprod_{(\alpha,\beta) \in  {\rm C}_2 \times {\rm C}_2} \mathscr{C}_n^{\alpha,\beta}. \]
In case $\mathscr{C}$ is one of the full subcategories $\mathscr{D}^c, \mathscr{D}^\ell, \mathscr{D}^r$, and 
$\mathscr{D}^1$ of $\mathscr{D}$ which are formed by all algebras $A \in \mathscr{D}$ having non-zero centre, a left 
unity, a right unity, and a unity respectively, Proposition~\ref{constant} implies that two or three of the blocks
$\mathscr{C}_n^{\alpha,\beta}$ are empty, and accordingly the above decomposition of $\mathscr{C}_n$ takes on the 
following special forms. 
\begin{cor}\label{1cor}
For all $n \in \{ 2,4,8 \}$, the categories $\mathscr{D}_n^c, \mathscr{D}_n^\ell, \mathscr{D}_n^r$, and 
$\mathscr{D}_n^1$ decompose according to 
\\[1ex]
$\begin{array}{@{}llclcll}
(i) & \mathscr{D}_n^c & = & \mathscr{D}_n^{c++} & \amalg & \mathscr{D}_n^{c--},& 
\\[1ex]
(ii) & \mathscr{D}_n^\ell & = & \mathscr{D}_n^{\ell ++} & \amalg & \mathscr{D}_n^{\ell +-},& 
\\[1ex]
(iii) & \mathscr{D}_n^r & = & \mathscr{D}_n^{r++} & \amalg & \mathscr{D}_n^{r-+}, & and
\\[1ex]
(iv) & \mathscr{D}_n^1 & = & \mathscr{D}_n^{1++}.& 
\end{array}$
\end{cor}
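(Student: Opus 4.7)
The plan is to observe that each of the full subcategories $\mathscr{D}^c,\mathscr{D}^\ell,\mathscr{D}^r,\mathscr{D}^1$ equips its objects with a distinguished non-zero element---a non-zero central element, a left unity, a right unity, or a two-sided unity---and that the defining relation on this element already pins down $\ell(A)$ or $r(A)$ (or both). The four-fold decomposition itself is already furnished by the general identity
\[ \mathscr{C}_n = \coprod_{(\alpha,\beta) \in {\rm C}_2 \times {\rm C}_2} \mathscr{C}_n^{\alpha,\beta} \]
opening this section; all that remains to be shown in each case is that certain of these blocks are empty. Proposition~\ref{constant} is the pivotal tool, since the value of $\ell$ (resp.\ $r$) anywhere on $A \setminus \{0\}$ already equals $\ell(A)$ (resp.\ $r(A)$).

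For (ii), I would take a left unity $e \in A$; then $\lt_e = \Id_A$ has determinant $1$, hence $\ell(A) = \ell(e) = 1$, which eliminates the blocks whose first coordinate is $-1$. Statement (iii) is the mirror argument using a right unity $e$ with $\rt_e = \Id_A$, yielding $r(A) = 1$. Statement (iv) is then immediate: a two-sided unity is simultaneously a left and a right unity, so (ii) and (iii) combine to give $p_n(A) = (1,1)$, and only $\mathscr{D}_n^{1++}$ survives.

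For (i), I would take any non-zero central element $c \in A$; centrality gives $\lt_c = \rt_c$, so $\det(\lt_c) = \det(\rt_c)$, and Proposition~\ref{constant} yields $\ell(A) = \ell(c) = r(c) = r(A)$. This forces the sign pair of $A$ onto the diagonal of ${\rm C}_2 \times {\rm C}_2$, killing the off-diagonal blocks $\mathscr{D}_n^{c+-}$ and $\mathscr{D}_n^{c-+}$.

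There is no genuine conceptual obstacle here; the argument is a direct unpacking of the definition of the generalized sign on the witness element supplied by the hypothesis defining $\mathscr{C}$. The only minor thing to note is that invoking Proposition~\ref{constant} requires $\dim(A) > 1$, which is automatic from $n \in \{2,4,8\}$.
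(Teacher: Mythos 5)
Your proof is correct and follows the same route the paper intends: the paper merely remarks that Proposition~\ref{constant} forces two or three blocks to be empty, and your evaluation of $\ell$ and $r$ at the distinguished element (left/right unity giving $\lt_e=\Id$ or $\rt_e=\Id$, a non-zero central $c$ giving $\lt_c=\rt_c$) is exactly the intended justification, correctly spelled out.
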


\noindent
Given any full subcategory $\mathscr{C}_n \subset  \mathscr{D}_n$, when are two blocks $\mathscr{C}_n^{\alpha,\beta}$ and 
$\mathscr{C}_n^{\gamma,\delta}$ of $\mathscr{C}_n$ equivalent? A sufficient criterion is presented in the following lemma,
and applied to $e$-quadratic real division algebras in the next subsection.
  
\begin{lem}\label{isoDmn}
Let $n \in \{ 2,4,8 \}$ and $\mathscr{C}_n \subset \mathscr{D}_n$ be any full subcategory. If there exist $m \in \N$ odd 
with $m < n$ and a functor $\mathscr{G}: \mathscr{C}_n \to \mathscr{D}_{mn}$ such that 
$\mathscr{F}\mathscr{G} = {\rm Id}$ and $\mathscr{I}_{ij}(\mathscr{G}(\mathscr{C}_n)) \subset \mathscr{G}(\mathscr{C}_n)$ 
for some $(i,j) \in \{0,1\}^2$, 
then the blocks $\mathscr{C}_n^{\alpha,\beta}$ and $\mathscr{C}_n^{(-1)^j\alpha,(-1)^i\beta}$ are isomorphic for all 
$(\alpha,\beta) \in {\rm C}_2 \times {\rm C}_2$.  
\end{lem}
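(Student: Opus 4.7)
The plan is to build the candidate block isomorphism as a single endofunctor $\mathscr{H} = \mathscr{F} \circ \mathscr{I}_{ij} \circ \mathscr{G}$ on $\mathscr{C}_n$, and then to verify three things about it: that it actually takes values in $\mathscr{C}_n$, that it shifts the double sign by $((-1)^j,(-1)^i)$, and that it is self-inverse. The last property will automatically yield the desired pair of mutually inverse isomorphisms between $\mathscr{C}_n^{\alpha,\beta}$ and $\mathscr{C}_n^{(-1)^j\alpha,(-1)^i\beta}$.

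For the first point, I would take $A \in \mathscr{C}_n$ and use the closure hypothesis $\mathscr{I}_{ij}\mathscr{G}(\mathscr{C}_n) \subset \mathscr{G}(\mathscr{C}_n)$ to produce $B \in \mathscr{C}_n$ with $\mathscr{G}(B) = \mathscr{I}_{ij}\mathscr{G}(A)$; applying $\mathscr{F}$ and invoking $\mathscr{F}\mathscr{G} = {\rm Id}$ immediately gives $\mathscr{H}(A) = B \in \mathscr{C}_n$. For the second point, the equation $\mathscr{F}\mathscr{G}(A) = A$ forces $\mathscr{G}(A) \in \mathscr{D}_{mn}^{\alpha,\beta}$ whenever $A \in \mathscr{C}_n^{\alpha,\beta}$; Theorem~\ref{theo}(iii) then sends $\mathscr{I}_{ij}\mathscr{G}(A)$ into $\mathscr{D}_{mn}^{(-1)^j\alpha,(-1)^i\beta}$, and forgetting by $\mathscr{F}$ preserves the double sign of the underlying algebra. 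Together with step one this places $\mathscr{H}(A)$ in $\mathscr{C}_n^{(-1)^j\alpha,(-1)^i\beta}$.

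The self-inversion $\mathscr{H}^2 = {\rm Id}$ is where the three hypotheses really conspire. The key auxiliary observation I would isolate first is that $\mathscr{F}\mathscr{G} = {\rm Id}$ forces $\mathscr{G}\mathscr{F}$ to fix $\mathscr{G}(\mathscr{C}_n)$ pointwise, via $\mathscr{G}\mathscr{F}(\mathscr{G}(A)) = \mathscr{G}(\mathscr{F}\mathscr{G}(A)) = \mathscr{G}(A)$. Combining this with $\mathscr{I}_{ij}^{2} = {\rm Id}$ from Theorem~\ref{theo}(ii) and with the closure hypothesis (which puts $\mathscr{I}_{ij}\mathscr{G}(A)$ inside $\mathscr{G}(\mathscr{C}_n)$), the chain $\mathscr{H}^2(A) = \mathscr{F}\mathscr{I}_{ij}(\mathscr{G}\mathscr{F})\mathscr{I}_{ij}\mathscr{G}(A) = \mathscr{F}\mathscr{I}_{ij}^{2}\mathscr{G}(A) = \mathscr{F}\mathscr{G}(A) = A$ collapses in one line. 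The action of $\mathscr{H}$ on morphisms is just $\varphi \mapsto \mathscr{G}(\varphi)$, whose underlying linear map is $\varphi$ by $\mathscr{F}\mathscr{G} = {\rm Id}$, so $\mathscr{H}^2(\varphi) = \varphi$ likewise.

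The main subtlety I expect to have to spell out carefully is precisely this interplay of $\mathscr{G}\mathscr{F}$ with $\mathscr{F}\mathscr{G}$: the functor $\mathscr{G}$ is by no means a two-sided inverse of $\mathscr{F}$ (the latter is highly non-injective on objects, as it forgets the decomposition $U \oplus V$), and it is only on the subclass $\mathscr{G}(\mathscr{C}_n)$ that $\mathscr{G}\mathscr{F}$ behaves as the identity. This is exactly what the closure condition $\mathscr{I}_{ij}\mathscr{G}(\mathscr{C}_n) \subset \mathscr{G}(\mathscr{C}_n)$ is designed to guarantee at the one spot in the above computation where it is invoked. Everything else --- functoriality of the composition, the fact that $\mathscr{H}$ is identity on underlying morphisms, and the verification that the block restrictions are genuine inverses of each other --- is then formal.
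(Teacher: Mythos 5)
Your proposal is correct and follows essentially the same route as the paper: both build the block isomorphism from the composite $\mathscr{F}\mathscr{I}_{ij}\mathscr{G}$, using $\mathscr{F}\mathscr{G}={\rm Id}$, the involutivity $\mathscr{I}_{ij}^{2}={\rm Id}$ from Theorem~\ref{theo}(ii), the closure hypothesis to stay inside $\mathscr{G}(\mathscr{C}_n)$, and Theorem~\ref{theo}(iii) for the sign shift. The only difference is presentational — the paper factors the composite as a chain of three isomorphisms through $\mathscr{G}(\mathscr{C}_n)$, while you verify directly that the composite is an involution — and both arguments rest on the same identities.
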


\begin{proof}
By hypothesis there is a functor $\mathscr{G}: \mathscr{C}_n \to \mathscr{D}_{mn}$ satisfying $\mathscr{F}\mathscr{G} = 
{\rm Id}$. Accordingly $\mathscr{G}(A) = (A,U_A,V_A)$ for all objects $A \in \mathscr{C}_n$, and $\mathscr{G}(\varphi) = 
\varphi$ for all morphisms $\varphi: A \to B$ in $\mathscr{C}_n$. The full subcategory $\mathscr{G}(\mathscr{C}_n) 
\subset \mathscr{D}_{mn}$ is thus isomorphic to $\mathscr{C}_n$, with mutually inverse isomorphisms    
\[ \begin{array}{ccc}
 & \mathscr{G}(\mathscr{C}_n)) & \\ 
 \mathscr{G} & \uparrow \downarrow & \mathscr{F} \\
 & \mathscr{C}_n & 
\end{array} \]
induced by the given functor $\mathscr{G}: \mathscr{C}_n \to \mathscr{D}_{mn}$ and the forgetful functor 
\mbox{$\mathscr{F}: \mathscr{D}_{mn} \to \mathscr{D}_n$.} The hypothesis $\mathscr{I}_{ij}(\mathscr{G}(\mathscr{C}_n)) 
\subset \mathscr{G}(\mathscr{C}_n)$ implies together with Theorem~\ref{theo}(ii) that the 
automorphism $\mathscr{I}_{ij}: \mathscr{D}_{mn} \to \mathscr{D}_{mn}$ induces an automorphism $\mathscr{I}_{ij}: 
\mathscr{G}(\mathscr{C}_n) \to \mathscr{G}(\mathscr{C}_n)$. This establishes the sequence of isomorphisms
\[ \begin{array}{ccccc}  & & \mathscr{I}_{ij} & & \\
 &\mathscr{G}(\mathscr{C}_n) & \longrightarrow &\mathscr{G}(\mathscr{C}_n)  & \\
\mathscr{G} & \uparrow & & \downarrow & \mathscr{F} \\
 & \mathscr{C}_n & & \mathscr{C}_n &  
\end{array} \]
which in view of Theorem~\ref{theo}(iii) induces a sequence of isomorphisms  
\[ \begin{array}{ccccc}  & & \mathscr{I}_{ij}^{\alpha,\beta} & & \\
 & \mathscr{G}(\mathscr{C}_n^{\alpha,\beta}) & \longrightarrow & \mathscr{G}(\mathscr{C}_n^{\gamma,\delta}) & \\
\mathscr{G}^{\alpha,\beta} & \uparrow & & \downarrow & \mathscr{F}^{\gamma,\delta} \\
 & \mathscr{C}_n^{\alpha,\beta} & & \mathscr{C}_n^{\gamma,\delta} &  
\end{array} \]
for every $(\alpha,\beta) \in {\rm C}_2 \times {\rm C}_2$, where $(\gamma,\delta) = ((-1)^j\alpha,(-1)^i\beta)$. 
\end{proof}

\subsection{Blocks of $e$-quadratic real division algebras}

Following Cuenca Mira \cite{cuenca06}, an algebra $A$ over a field $k$ is called \emph{$e$-quadratic} if it contains a 
non-zero central idempotent $e$ such that $x^2 \in {\rm span}_k\{e,ex\}$ for all $x \in A$. For any $k$-algebra $A$ with 
non-zero central idempotent $e$ we define the subset $\Im_e(A) \subset A$ by $\Im_e(A)=\{ v \in A \setminus ke \mid v^2 
\in ke \} \cup \{0\}$. Then ``Frobenius's trick'' \cite[p.~61]{frobenius78} leads to a proof of the subsequent 
Lemma~\ref{eqIm} (cf. also \cite[Lemma~1]{dickson35}, \cite[p.~227]{kr91iso}).  

\begin{lem}\label{eqIm}
If $A$ is an $e$-quadratic $k$-algebra with ${\rm char}(k) \not= 2$ and $\lt_e$ is injective, then $\Im_e(A) \subset A$ is a 
$k$-linear subspace and $A = ke \oplus \Im_e(A)$.
\end{lem}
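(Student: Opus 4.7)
The plan is, in three stages, first to promote the central idempotent $e$ to a two-sided unity, then to use Frobenius's trick to obtain the decomposition $A = ke + \Im_e(A)$ as sets, and finally to verify that $\Im_e(A)$ is a $k$-linear subspace. For the first stage I would observe that $\lt_e^2 = \lt_{e^2} = \lt_e$, so applying injectivity of $\lt_e$ to the identity $\lt_e(\lt_e(x) - x) = 0$ yields $\lt_e(x) = x$ for every $x \in A$; centrality of $e$ then gives $\rt_e(x) = xe = ex = x$ as well. Thus the $e$-quadratic condition $x^2 \in \spann_k\{e, ex\}$ reduces to
\[ x^2 = \alpha(x) e + \beta(x) x \]
for scalars $\alpha(x), \beta(x) \in k$, which are uniquely determined whenever $x \notin ke$ (for then any relation $\lambda e + \mu x = 0$ with $\mu \neq 0$ would force $x \in ke$, so that $\{e, x\}$ is linearly independent).

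Frobenius's trick then consists, for each $x \in A \setminus ke$, of setting $v = x - \frac{\beta(x)}{2} e$; this is legitimate because $\mathrm{char}(k) \neq 2$. A direct expansion gives
\[ v^2 = x^2 - \beta(x) x + \frac{\beta(x)^2}{4} e = \left( \alpha(x) + \frac{\beta(x)^2}{4} \right) e \in ke, \]
while $v \notin ke$ because $x \notin ke$. Hence $v \in \Im_e(A)$ and $x = \frac{\beta(x)}{2} e + v \in ke + \Im_e(A)$. Since the elements of $ke$ lie trivially in this sum, $A = ke + \Im_e(A)$, and the intersection $ke \cap \Im_e(A) = \{0\}$ is built into the definition of $\Im_e(A)$.

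It remains to verify that $\Im_e(A)$ is a $k$-linear subspace. Closure under scalar multiplication is immediate from $(\lambda v)^2 = \lambda^2 v^2 \in ke$, together with the observation that $\lambda v \in ke$ with $\lambda \neq 0$ would force $v \in ke$. The principal obstacle is closure under addition. For $u, v \in \Im_e(A) \setminus \{0\}$ with $u \pm v \notin ke$, I would polarise the coefficient identity by applying it to $u \pm v$, extract from each the resulting expression for $uv + vu$, and equate the two in order to obtain
\[ \bigl( \beta(u+v) + \beta(u-v) \bigr) u + \bigl( \beta(u+v) - \beta(u-v) \bigr) v \in ke. \]
Linear independence of $\{e, u, v\}$ would then force both $\beta$-coefficients to vanish, whence $(u+v)^2 = \alpha(u+v) e \in ke$ and $u + v \in \Im_e(A)$. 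The remaining degenerate configurations---$u$ and $v$ linearly dependent, $e \in \spann_k\{u, v\}$, or $u \pm v \in ke$---would be settled by short direct computations using $u^2, v^2 \in ke$ and $e^2 = e$; in particular $u + v \in ke$ will be shown to imply $u + v = 0$ by expanding $v^2 = (-u + \mu e)^2$ and invoking $\mathrm{char}(k) \neq 2$ to force $\mu = 0$. This case analysis is the principal technical nuisance of the proof.
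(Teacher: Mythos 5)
Your proof is correct and is precisely the ``Frobenius trick'' that the paper invokes (but does not write out) for this lemma: injectivity of $\lt_e$ upgrades the central idempotent to a unity, completing the square yields $A = ke + \Im_e(A)$, and polarisation of the quadratic relation gives additive closure of $\Im_e(A)$, with your treatment of the degenerate configurations (in particular $u+v\in ke$ forcing $u+v=0$) being the standard and adequate one. The only point worth making explicit is that your identity $\lt_e^2=\lt_{e^2}$ uses that $e$ associates with all elements, i.e.\ that ``central'' is taken in the usual nonassociative sense of lying in the centre (commuting and associating), not merely commuting.
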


\begin{lem}\label{eqdecomp}
If $A$ is an $e$-quadratic real division algebra and $\dim(A) > 2$, then $e$ is unique. In particular, the decomposition 
$A = \R e \oplus \Im_e(A)$ is uniquely determined by $A$. 
\end{lem}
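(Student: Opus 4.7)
The plan is to prove uniqueness of $e$; the uniqueness of the decomposition $A = \R e \oplus \Im_e(A)$ will then follow immediately, since both summands are described purely in terms of $e$. Suppose $e'$ is a second non-zero central idempotent turning $A$ into an $e'$-quadratic algebra. Using Lemma~\ref{eqIm} I would write $e' = \lambda e + v$ uniquely with $\lambda \in \R$ and $v \in \Im_e(A)$, and then aim to show $v = 0$ and $\lambda = 1$.

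The first preliminary step is to show that $v$ is central in $A$: expanding $e'x = xe'$ by bilinearity and invoking the centrality of $e$ at once yields $vx = xv$ for every $x \in A$.

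The second, and key, preliminary step is to exhibit a canonical negative-definite quadratic form on $\Im_e(A)$. For $w \in \Im_e(A)$ define $q_e(w) \in \R$ by $w^2 = q_e(w) e$. The identity
\[ (w - te)(w + te) = w^2 - t^2 e, \]
which holds because $e$ is central (so no associativity is required), shows that $q_e(w) > 0$ would produce a non-trivial zero divisor $w \mp \sqrt{q_e(w)}\, e$; and $q_e(w) = 0$ with $w \neq 0$ already displays $w$ as a zero divisor. Hence $q_e$ is negative definite on $\Im_e(A)$, and polarisation yields a symmetric bilinear form $B_e$ satisfying $uw + wu = 2 B_e(u, w) e$ for all $u, w \in \Im_e(A)$.

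The main step, and the only place where the hypothesis $\dim A > 2$ enters, is a dimension count. Assuming $v \neq 0$, we have $\dim \Im_e(A) = \dim A - 1 \geq 2$, so the $B_e$-orthogonal complement of $\R v$ in $\Im_e(A)$ is non-zero and contains some $u \neq 0$. Then $B_e(u, v) = 0$ gives $uv + vu = 0$, while the centrality of $v$ gives $uv = vu$; combining the two forces $uv = 0$, contradicting the division-algebra axiom. Hence $v = 0$, so $e' = \lambda e$, and idempotence with $\lambda \neq 0$ reduces $\lambda^2 = \lambda$ to $\lambda = 1$, that is $e' = e$. I expect the hardest point of the proof to be spotting this anticommutator/commutator interplay obtained by choosing $u$ orthogonal to $v$; once that is in place, everything else is routine bookkeeping.
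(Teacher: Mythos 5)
Your proof is correct, but it takes a genuinely different route from the paper's. The paper argues directly with the two imaginary spaces: by Lemma~\ref{eqIm}, $\Im_e(A)$ and $\Im_f(A)$ are both hyperplanes in $A$, so $\dim(A)>2$ forces their intersection to contain some $v\neq 0$; then $v^2$ is a non-zero element of $\R e\cap\R f$, whence $\R e=\R f$ and, by idempotence, $e=f$. You instead decompose the second idempotent as $e'=\lambda e+v$ with $v\in\Im_e(A)$, observe that $v$ is central, construct the negative-definite form $B_e$ on $\Im_e(A)$, and play the anticommutator $uv+vu=0$ (for $u$ orthogonal to $v$) against the commutator $uv=vu$ to force $uv=0$. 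Both arguments spend the hypothesis $\dim(A)>2$ on a one-line dimension count --- non-triviality of $\Im_e(A)\cap\Im_f(A)$ in the paper, of the orthogonal complement of $\R v$ inside $\Im_e(A)$ in yours. The paper's proof is shorter and bypasses the definiteness of the norm form entirely; yours is longer but proves something slightly stronger, since you never use that $A$ is $e'$-quadratic, only that $e'$ is a non-zero central idempotent, so your argument shows that $e$ is the \emph{unique} non-zero central idempotent of $A$. The individual steps all check out: the identity $(w-te)(w+te)=w^2-t^2e$ needs only centrality and idempotence of $e$, and the factors $w\mp te$ are non-zero by the directness of $A=\R e\oplus\Im_e(A)$.
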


\begin{proof}
Let $f \in A$ be any non-zero central idempotent such that $x^2 \in {\rm span}_\R\{f,fx\}$ for all $x \in A$. Then 
Lemma~\ref{eqIm} combined with $\dim(A) > 2$ implies that $\Im_e(A) \cap \Im_f(A)$ contains a non-zero element $v$, and $v^2 \in 
\R e \cap \R f$ together with $v^2 \not= 0$ implies $e = f$.
\end{proof}

\noindent
Consider now the full subcategory $\mathscr{E} \subset \mathscr{D}$ that is formed by all $e$-quadratic algebras 
in $\mathscr{D}$. Let $n \in \{4,8\}$ and $A,B \in \mathscr{E}_n$. By Lemma~\ref{eqdecomp}, $A$ and $B$ have unique decompositions
$A = \R e \oplus \Im_e(A)$ and $B = \R f \oplus \Im_f(B)$ respectively. Moreover, every morphism $\varphi: A \to B$ in 
$\mathscr{E}_n$ satisfies $\varphi(e) = f$ and $\varphi(\Im_e(A)) = \Im_f(B)$. So $\varphi: (A, \R e, \Im_e(A)) 
\to (B, \R f, \Im_f(B))$ is a morphism in $\mathscr{D}_{1n}$. This reveals a functor $\mathscr{G}: \mathscr{E}_n \to 
\mathscr{D}_{1n}$, defined on objects by $\mathscr{G}(A)=(A, \R e, \Im_e(A))$ and on morphisms by 
$\mathscr{G}(\varphi) = \varphi$. 

\begin{pro} \label{eqG}
For each $n \in \{4,8\}$, the functor $\mathscr{G}: \mathscr{E}_n \to \mathscr{D}_{1n}$ satisfies
$\mathscr{F}\mathscr{G} = {\rm Id}$ and $\mathscr{I}_{11}(\mathscr{G}(\mathscr{E}_n)) \subset 
\mathscr{G}(\mathscr{E}_{n})$.
\end{pro}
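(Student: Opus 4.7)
The identity $\mathscr{F}\mathscr{G} = {\rm Id}$ is immediate from the definitions: $\mathscr{G}$ merely equips $A$ with the decomposition $(\R e, \Im_e(A))$, which $\mathscr{F}$ forgets, and both functors act as the identity on morphisms. So the substance of the proposition lies in the inclusion $\mathscr{I}_{11}(\mathscr{G}(\mathscr{E}_n)) \subset \mathscr{G}(\mathscr{E}_n)$.

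For this, I would fix $A \in \mathscr{E}_n$ with its unique (by Lemma~\ref{eqdecomp}) central idempotent $e$, and let $\kappa = \kappa_{\mathscr{G}(A)}$ be the reflection fixing $\R e$ and negating $\Im_e(A)$, so that $\mathscr{I}_{11}(\mathscr{G}(A)) = (A_{\kappa,\kappa}, \R e, \Im_e(A))$. The plan is to realise this triple as $\mathscr{G}(A_{\kappa,\kappa})$ for an object $A_{\kappa,\kappa} \in \mathscr{E}_n$. The guiding observation is that $\kappa(e) = e$ and $\kappa^2 = {\rm id}$, so isotopy by $(\kappa, \kappa)$ should preserve the whole $e$-quadratic skeleton intact.

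To verify this, write $\circ$ for the product on $A_{\kappa,\kappa}$. First, $e \circ e = \kappa(e)\kappa(e) = e^2 = e$, and $e \circ x = e\,\kappa(x) = \kappa(x)\,e = x \circ e$ by centrality of $e$ in $A$, so $e$ is still a non-zero central idempotent. Secondly, substituting $y = \kappa(x)$ in the relation $y^2 \in {\rm span}_\R\{e, ey\}$ valid in $A$ yields $x \circ x = \kappa(x)^2 \in {\rm span}_\R\{e, e\,\kappa(x)\} = {\rm span}_\R\{e, e \circ x\}$, i.e.\ the $e$-quadratic condition in $A_{\kappa,\kappa}$. Since $A_{\kappa,\kappa}$ inherits the division property (as noted in the paragraph preceding Lemma~\ref{isotopisign}), it follows that $A_{\kappa,\kappa} \in \mathscr{E}_n$, with $e$ its unique central idempotent by Lemma~\ref{eqdecomp}.

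Finally, to match the remaining components of the triples, I would observe that for any $v \in \Im_e(A) \setminus \{0\}$ one has $v \notin \R e$ and $v \circ v = \kappa(v)^2 = (-v)^2 = v^2 \in \R e$, so $v \in \Im_e(A_{\kappa,\kappa})$; Lemma~\ref{eqIm} gives $\dim \Im_e(A_{\kappa,\kappa}) = n - 1 = \dim \Im_e(A)$, forcing the two subspaces to coincide. Therefore $\mathscr{G}(A_{\kappa,\kappa}) = (A_{\kappa,\kappa}, \R e, \Im_e(A)) = \mathscr{I}_{11}(\mathscr{G}(A))$, which is the required inclusion. No step poses a real obstacle; the one piece that needs a moment's thought is the transfer of the $e$-quadratic identity to the isotope, and there the involutivity of $\kappa$ together with $\kappa(e) = e$ does all the work.
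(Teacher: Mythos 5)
Your proposal is correct and follows essentially the same route as the paper: verify that $e$ remains a non-zero central idempotent in $A_{\kappa,\kappa}$, transfer the quadratic relation via $x\circ x=\kappa(x)^2\in{\rm span}_\R\{e,e\circ x\}$, and check $v\circ v=v^2\in\R e$ to identify $\Im_e(A_{\kappa,\kappa})$ with $\Im_e(A)$. Your explicit dimension count via Lemma~\ref{eqIm} just spells out what the paper leaves implicit.
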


\begin{proof}
The identity $\mathscr{F}\mathscr{G} = {\rm Id}$ holds by definition of $\mathscr{G}$. To prove the asserted inclusion, 
let $A \in \mathscr{E}_{n}$.  Then 
\[ \mathscr{I}_{11}(\mathscr{G}(A)) = \mathscr{I}_{11}(A,\R e,\Im_e(A)) = (A_{\kappa,\kappa},\R e,\Im_e(A)),\] 
where $\kappa(\alpha e + v) = \alpha e - v$ for all $\alpha \in \R$ and $v \in \Im_e(A)$. Now $e \in A_{\kappa,\kappa}$ 
is a non-zero central idempotent such that $x \circ x = \kappa(x)\kappa(x) \in {\rm span}_\R\{e,e\kappa(x)\} = 
{\rm span}_\R\{e,e \circ x\}$ for all $x \in A_{\kappa,\kappa}$, and $v \circ v = (-v)(-v) = v^2 \in \R e$ for all $v \in 
\Im_e(A)$. Hence $A_{\kappa,\kappa} \in \mathscr{E}_{n}$, and $\Im_e(A) = \Im_e(A_{\kappa,\kappa})$. 
It follows that $\mathscr{I}_{11}(\mathscr{G}(A)) = (A_{\kappa,\kappa},\R e,\Im_e(A_{\kappa,\kappa})) = 
\mathscr{G}(A_{\kappa,\kappa})$.
\end{proof}

\begin{cor}
For each $n \in \{ 4,8 \}$, the category $\mathscr{E}_n$ of all $n$-dimensional $e$-quadratic real division algebras
decomposes in accordance with 
\[ \mathscr{E}_n = \mathscr{E}_n^{++} \amalg \mathscr{E}_n^{--}, \]
and its blocks $\mathscr{E}_n^{++}$ and $\mathscr{E}_n^{--}$ are isomorphic.
\end{cor}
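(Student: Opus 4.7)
My plan has two parts, matching the two assertions of the corollary. First I would establish the two-block decomposition, then transport one of the surviving blocks to the other via the machinery developed in Section 3.

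For the decomposition I would argue as follows. By definition every $A \in \mathscr{E}_n$ comes equipped with a non-zero central idempotent $e$, so $\mathscr{E}_n$ is a full subcategory of $\mathscr{D}_n^c$. The existence of such a non-zero central element forces $\ell(A) = r(A)$, because $\lt_e = \rt_e$ implies $s(\lt_e) = s(\rt_e)$; this is precisely what Corollary~\ref{1cor}(i) records. Hence $\mathscr{E}_n^{+-}$ and $\mathscr{E}_n^{-+}$ are both empty, and the general decomposition obtained by restricting Proposition~\ref{Ddecomp} to $\mathscr{E}_n$ collapses to the stated $\mathscr{E}_n = \mathscr{E}_n^{++} \amalg \mathscr{E}_n^{--}$.

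For the isomorphism $\mathscr{E}_n^{++} \cong \mathscr{E}_n^{--}$ I would invoke Lemma~\ref{isoDmn} directly, with $\mathscr{C}_n = \mathscr{E}_n$, $m = 1$, $(i,j) = (1,1)$, and $\mathscr{G}: \mathscr{E}_n \to \mathscr{D}_{1n}$ the functor furnished by Proposition~\ref{eqG}. That proposition has already checked both hypotheses $\mathscr{F}\mathscr{G} = {\rm Id}$ and $\mathscr{I}_{11}(\mathscr{G}(\mathscr{E}_n)) \subset \mathscr{G}(\mathscr{E}_n)$. Lemma~\ref{isoDmn} then yields, for every $(\alpha,\beta) \in {\rm C}_2 \times {\rm C}_2$, an isomorphism of categories $\mathscr{E}_n^{\alpha,\beta} \cong \mathscr{E}_n^{-\alpha,-\beta}$; specialising to $(\alpha,\beta) = (1,1)$ produces the asserted isomorphism $\mathscr{E}_n^{++} \cong \mathscr{E}_n^{--}$.

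I do not expect any genuine obstacle here, because the substantive work has been front-loaded into Proposition~\ref{eqG} (constructing the canonical decomposition functor $\mathscr{G}$ and verifying stability under the isotopy automorphism $\mathscr{I}_{11}$) and into Lemma~\ref{isoDmn} (the general transfer principle). The only point that merits careful articulation in the write-up is why the two missing blocks vanish, and that is exactly the content of Corollary~\ref{1cor}(i) specialised to $\mathscr{E}_n \subset \mathscr{D}_n^c$.
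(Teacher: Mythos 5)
Your proposal is correct and follows exactly the paper's route: the decomposition is obtained by observing $\mathscr{E}_n \subset \mathscr{D}_n^c$ and citing Corollary~\ref{1cor}(i), and the isomorphism of blocks comes from applying Lemma~\ref{isoDmn} with $m=1$, $(i,j)=(1,1)$ and the functor $\mathscr{G}$ of Proposition~\ref{eqG}. Your extra remark that centrality of $e$ forces $\lt_e=\rt_e$ and hence $\ell(A)=r(A)$ is precisely the reason behind Corollary~\ref{1cor}(i), so nothing is missing.
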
 
\begin{proof}
Since $\mathscr{E}_n \subset \mathscr{D}_n^c$, the asserted block decomposition of $\mathscr{E}_n$ follows from 
Corollary~\ref{1cor}(i). Proposition~\ref{eqG} states that Lemma~\ref{isoDmn} can be applied to the full subcategory $\mathscr{E}_n \subset 
\mathscr{D}_n$, $m = 1$ and $(i,j) = (1,1)$, which yields the isomorphism of blocks $\mathscr{E}_n^{++}$ and 
$\mathscr{E}_n^{--}$. 
\end{proof}

\subsection{Blocks of isotopes of the quaternion algebra}

We denote by $\mathcal{Q}$ the full subcategory of $\mathscr{D}_4$ consisting of all
isotopes of Hamilton's quaternion algebra $\H$, i.e., all algebras of the form
$\H_{\sigma,\tau}$ for some $\sigma,\tau\in\gl_\R(\H)$. 
Moreover, $\mathscr{P}$ is the full subcategory
$\mathscr{P}=\{\H_{\sigma,\tau}\mid \sigma,\tau\in\og(\H)\}\subset\mathcal{Q}$.

An absolute valued algebra is a real algebra $A$ equipped with a norm
$\|\!\cdot\!\|:A\to\R$ satisfying $\|xy\|=\|x\|\|y\|$ for all $x,y\in A$. 
Due to Albert \cite{albert47ava}, every finite-dimensional absolute valued algebra is
isomorphic to an isotope $A_{\sigma,\tau}$, where $A$ is one of the four classical real
division algebras $\R$, $\C$, $\H$ and $\O$, and $\sigma,\tau\in\og(A)$.
Taking $\mathscr{A}$ to be the category of all finite-dimensional absolute valued
algebras, it follows that $\mathscr{P}$ is a dense and full subcategory of $\mathscr{A}_4$.

For any ring $R$, the multiplicative group of invertible elements in $R$ is denoted by
$R^\ast$.
Every left group action $G \times M \to M$ gives rise to a groupoid $_GM$, with object set
$Ob( _GM) = M$ and morphism sets $_GM(x,y)= \{(g,x,y) \mid gx = y\}$ for all $x,y\in M$.
In particular, the action of $\H^\ast/\R^\ast$ on the set $(\H^\ast/\R^\ast)^2\times
\spds^2$ given by 
$$[s]\cdot(([a],[b]),(C,D))= 
\left(([K_s(a)],[K_s(b)]),(K_sC K_s^{-1},K_sD K_s^{-1})\right)$$ 
gives rise to a groupoid
$\mathscr{Z}={}_{\H^\ast/\R^\ast}\left( (\H^\ast/\R^\ast)^2\times \spds^2 \right)$.
Here $\spds$ denotes the set of all linear endomorphisms of $\H$ that are positive
definite symmetric and have determinant 1, and $K_s=\lt_s\rt_{s^{-1}}$.

Let $\kappa$ be the natural involution on $\H$, i.e.{}, $\kappa=\kappa_X$ for
$X=(\H,\R1,\Im_1(\H))\linebreak[2]\in\mathscr{D}_{14}$ ($\H$ is quadratic, so by Lemma~\ref{eqIm},
$\Im_1(\H)\subset\H$ is a subspace complementary to $\R1$).
Choose a set $\mathcal{H}$ of coset representatives for $\H^\ast/\R^\ast$, such that every
element of $\mathcal{H}$ has norm 1.
For $(\alpha,\beta)\in \mathrm{C}_2\times\mathrm{C}_2$, let
$\mathscr{H}_{\alpha,\beta}:\mathscr{Z}\to\mathcal{Q}^{\alpha,\beta}$ be the functor defined by
$\mathscr{H}_{\alpha,\beta}([s])=K_s$ for morphisms, and for objects
$\mathscr{H}_{\alpha,\beta}(([a],[b]),(C,D))=\H_{\sigma,\tau}$, where
$a,b\in\mathcal{H}$ and
\begin{align*}
  (\sigma,\tau)&= (\lt_aC,\rt_bD) &\mbox{if}\quad&(\alpha,\beta)=(1,1),\\
  (\sigma,\tau)&= (\rt_aC\kappa,\rt_bD) &\mbox{if}\quad&(\alpha,\beta)=(1,-1),\\
  (\sigma,\tau)&= (\lt_aC,\lt_bD\kappa) &\mbox{if}\quad&(\alpha,\beta)=(-1,1),\\
  (\sigma,\tau)&= (\lt_aC\kappa,\rt_bD\kappa) 
  &\mbox{if}\quad&(\alpha,\beta)=(-1,-1).
\end{align*}
We have the following result (see \cite[Propositions~11,~12]{hurwalg}):
\begin{pro}
  For each $(\alpha,\beta)\in\mathrm{C}_2\times\mathrm{C}_2$, the functor
  $\mathscr{H}_{\alpha,\beta}:\mathscr{Z}\to\mathcal{Q}^{\alpha,\beta}$ is an equivalence
  of categories.
  Thus the four blocks $\mathcal{Q}^{++}$, $\mathcal{Q}^{+-}$ $\mathcal{Q}^{-+}$ and
  $\mathcal{Q}^{--}$ are equivalent to each other. 
\end{pro}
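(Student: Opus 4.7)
The plan is to prove that $\mathscr{H}_{\alpha,\beta}$ is essentially surjective and fully faithful for each $(\alpha,\beta)$; the equivalence of any two blocks then follows by composition with quasi-inverses. The key preliminary is a classification of the morphisms of $\mathcal{Q}$: every algebra isomorphism $\varphi:\H_{\sigma,\tau}\to\H_{\sigma',\tau'}$ between isotopes of $\H$ has the form $\varphi=L_pR_qK_s$ for some $p,q\in\H^\ast$ and $[s]\in\H^\ast/\R^\ast$. To prove this I would substitute $u=\sigma(x)$, $v=\tau(y)$ into the homomorphism condition to obtain $\varphi(uv)=\psi(u)\chi(v)$ with $\psi=\sigma'\varphi\sigma^{-1}$ and $\chi=\tau'\varphi\tau^{-1}$; specialising to $u=1$ and $v=1$ gives $\varphi=L_{\psi(1)}\chi=R_{\chi(1)}\psi$, and substituting back reduces matters to classifying unital $\R$-algebra automorphisms of $\H$, namely the inner automorphisms $K_s$. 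This yields the transformation formula
\[ \sigma' = (L_pK_s)\sigma(L_pK_s)^{-1}R_{q^{-1}},\qquad \tau' = (R_qK_s)\tau(R_qK_s)^{-1}L_{p^{-1}}. \]

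For essential surjectivity of $\mathscr{H}_{1,1}$, I would take $\H_{\sigma,\tau}\in\mathcal{Q}^{1,1}$, rescale so that $\det\sigma=\det\tau=1$, and polar-decompose via $\mathrm{SO}(\H)\cong(\S^3\times\S^3)/\{\pm1\}$ to write $\sigma=L_{u_1}R_{v_1}P_1$, $\tau=L_{u_2}R_{v_2}P_2$ with unit $u_i,v_i$ and $P_i\in\spds$. Applying $\varphi=L_{u_2}R_{v_1}$ (i.e.\ $p=u_2,q=v_1,s=1$) and expanding using $L_pR_q=R_qL_p$, $L_pL_{p'}=L_{pp'}$, $R_qR_{q'}=R_{q'q}$ and the invariance of $\spds$ under orthogonal conjugation, a direct computation produces $\sigma'=L_{u_2u_1u_2^{-1}}C$ and $\tau'=R_{v_1^{-1}v_2v_1}D$ with $C,D\in\spds$, exhibiting $\H_{\sigma,\tau}$ as an image of $\mathscr{H}_{1,1}$. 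For each of the three blocks containing a sign $-1$, I would factor a $\kappa$ out of whichever of $\sigma,\tau$ has negative determinant, apply the previous reduction to the remaining $\gl^+(\H)$-factor, and use the intertwinings $\kappa L_p=R_{\kappa(p)}\kappa$, $\kappa R_q=L_{\kappa(q)}\kappa$ (namely, $\kappa$ swaps left and right multiplication) to move $\kappa$ to the position prescribed by that block.

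For full faithfulness in block $(1,1)$, I would take any $\varphi=L_pR_qK_s$ realising an iso $\H_{L_aC,R_bD}\to\H_{L_{a'}C',R_{b'}D'}$ and plug into the transformation formula, obtaining $L_{a'}C'=L_{pK_s(a)p^{-1}}\widetilde C\cdot R_{q^{-1}}$ for some $\widetilde C\in\spds$. Rewriting $\widetilde CR_{q^{-1}}=R_{q^{-1}}(R_q\widetilde CR_{q^{-1}})$ and invoking uniqueness of polar decomposition forces $R_{q^{-1}}$ to coincide with a left multiplication, which since $Z(\H)=\R$ means $[q]=1$ in $\H^\ast/\R^\ast$. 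The analogous argument on $\tau$ forces $[p]=1$, so $\varphi=K_s$ up to a central scalar, and the residual constraints $[a']=[K_s(a)]$, $[b']=[K_s(b)]$, $C'=K_sCK_s^{-1}$, $D'=K_sDK_s^{-1}$ are exactly the morphism data of $\mathscr{Z}$. Conversely every morphism of $\mathscr{Z}$ clearly produces a morphism of $\mathcal{Q}^{1,1}$ via $K_s$, so $\mathscr{H}_{1,1}$ is bijective on morphism sets.

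The hard part is the polar-decomposition bookkeeping: the non-commutation among $L$, $R$, $K_s$, and $\kappa$, combined with the separate handling of each of the four blocks, makes the essential surjectivity step technical and error-prone. The four prescriptions defining $\mathscr{H}_{\alpha,\beta}$ are mildly asymmetric, and verifying that factoring a $\kappa$ and then running the $(1,1)$-argument produces precisely the canonical forms listed requires careful attention to the precise side on which $\kappa$ ends up.
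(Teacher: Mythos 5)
The paper offers no proof of this proposition at all --- it is quoted from \cite[Propositions~11,~12]{hurwalg} --- so there is nothing internal to compare against; what you have written is essentially a reconstruction of the argument in that reference, and its main ingredients are the right ones: Albert's description of isotopy-isomorphisms of an associative division algebra as $\varphi=\lt_p\rt_q\theta$ with $\theta$ a unital automorphism (hence inner, $\theta=K_s$), the polar decomposition $\gl^+_{\R}(\H)=\R^{>0}\cdot\big(\lt_{\S^3}\rt_{\S^3}\big)\cdot\spds$ for essential surjectivity, uniqueness of polar decomposition plus $Z(\H)=\R$ to force $[p]=[q]=1$ for fullness and faithfulness, and conjugation by $\kappa$ to shuttle between the four blocks. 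Your computations as described do check out (e.g.\ $\sigma'=\lt_p\sigma\varphi^{-1}$, $\tau'=\rt_q\tau\varphi^{-1}$ with $q=v_1$ kills the right-multiplication factor of $\sigma$, and $\det\lt_a=N(a)^2>0$ guarantees the image lands in the correct block by Lemma~\ref{isotopisign}).

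Two loose ends you should tie up. First, the normalisation by $\mathcal{H}$: you need $\H_{\lambda\sigma,\tau}\cong\H_{\sigma,\tau}$ for $\lambda\in\R^\ast$ (via $x\mapsto\lambda^{-1}x$) to replace $a,b$ by unit-norm coset representatives, and in the faithfulness step the polar decomposition only pins $\varphi$ down as a \emph{scalar multiple} of $K_s$ (the representatives $a'$ and $K_s(a)$ in $\mathcal{H}$ may differ by $\pm1$), so one must check that the scalar is forced and that the functor's morphism assignment $[s]\mapsto K_s$ is genuinely bijective onto $\mathcal{Q}^{\alpha,\beta}\big(\mathscr{H}_{\alpha,\beta}(X),\mathscr{H}_{\alpha,\beta}(Y)\big)$ rather than bijective up to sign. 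Second, the $\kappa$-shuffling for the three blocks with a $-1$: since $\kappa$ is an anti-automorphism, factoring it out converts left into right multiplications, which is exactly why the four canonical forms in the definition of $\mathscr{H}_{\alpha,\beta}$ are asymmetric ($\rt_aC\kappa$ versus $\lt_aC$, etc.); this is pure bookkeeping but, as you say yourself, it is where errors creep in, and a complete proof must carry it out for each block rather than gesture at it.
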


Let $\mathscr{Y}\subset\mathscr{Z}$ be the full subcategory defined by 
$\mathscr{Y}= {}_{\H^\ast/\R^\ast}\left( (\H^\ast/\R^\ast)^2\times \{\I\}^2 \right)$.

\begin{cor}
  For each $(\alpha,\beta)\in\mathrm{C}_2\times\mathrm{C}_2$, the functor
  $\mathscr{H}_{\alpha,\beta}$ induces an equivalence
  $\mathscr{Y}\to\mathscr{P}^{\alpha,\beta}$. 
  Hence the four blocks $\mathscr{A}_4^{++}$, $\mathscr{A}_4^{+-}$, $\mathscr{A}_4^{-+}$ and
  $\mathscr{A}_4^{--}$ are equivalent to each other.
\end{cor}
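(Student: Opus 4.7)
The plan is to establish that the functor $\mathscr{H}_{\alpha,\beta}$ restricts to an equivalence $\mathscr{Y}\to\mathscr{P}^{\alpha,\beta}$, and then to pass from $\mathscr{P}^{\alpha,\beta}$ to $\mathscr{A}_4^{\alpha,\beta}$ via Albert's theorem.

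For the restriction into $\mathscr{P}^{\alpha,\beta}$, I first observe that with $C=D=\I$ each of the four formulas for $(\sigma,\tau)$ is a composition of operators of the forms $\lt_a$, $\rt_a$, $\lt_b$, $\rt_b$ and $\kappa$. Since $a,b\in\mathcal{H}$ have norm one, the associated left and right multiplications are orthogonal, and $\kappa$ is orthogonal because $\H=\R1\oplus\Im_1(\H)$ is an orthogonal decomposition; hence $\sigma,\tau\in\og(\H)$ and $\mathscr{H}_{\alpha,\beta}(\mathscr{Y})\subset\mathscr{P}^{\alpha,\beta}$. Fully faithfulness of the restricted functor is automatic, since $\mathscr{Y}\subset\mathscr{Z}$ and $\mathscr{P}^{\alpha,\beta}\subset\mathcal{Q}^{\alpha,\beta}$ are full subcategories. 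For essential surjectivity, given $A=\H_{\sigma,\tau}\in\mathscr{P}^{\alpha,\beta}$ the preceding proposition yields $X=(([a],[b]),(C,D))\in\mathscr{Z}$ with $\mathscr{H}_{\alpha,\beta}(X)\cong A$; the task is to force $(C,D)=(\I,\I)$. Because the $\H^\ast/\R^\ast$-action is by $K_s$-conjugation on the $\spds^2$-factor and $K_s\I K_s^{-1}=\I$, the property ``$X\in\mathscr{Y}$'' is constant on orbits, so it suffices to unpack the explicit inverse construction of $\mathscr{H}_{\alpha,\beta}$ from \cite{hurwalg}: there $C$ and $D$ arise as positive-definite symmetric parts of polar-type decompositions of the data extracted from $\sigma$ and $\tau$, and a direct computation of $\sigma\sigma^T$ and $\tau\tau^T$ shows these polar parts reduce to $\I$ whenever $\sigma,\tau\in\og(\H)$. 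Hence $X\in\mathscr{Y}$, completing the equivalence $\mathscr{Y}\simeq\mathscr{P}^{\alpha,\beta}$.

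The second assertion then follows by density. Albert's theorem makes $\mathscr{P}$ a dense and full subcategory of $\mathscr{A}_4$; invoking Proposition~\ref{isomsign} this restricts to equivalences $\mathscr{P}^{\alpha,\beta}\simeq\mathscr{A}_4^{\alpha,\beta}$ for each sign pair. Composing with the equivalences $\mathscr{Y}\simeq\mathscr{P}^{\alpha,\beta}$ just constructed shows that every $\mathscr{A}_4^{\alpha,\beta}$ is equivalent to the single category $\mathscr{Y}$, and hence pairwise equivalent. The main obstacle is the essential surjectivity step: while the containment of images and the inheritance of fully faithfulness are formal, forcing $(C,D)=(\I,\I)$ on the preimage requires tracing the polar-type construction underlying Propositions~11 and~12 of \cite{hurwalg}, after which orthogonality of $(\sigma,\tau)$ finishes the computation.
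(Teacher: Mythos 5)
Your argument is correct and follows essentially the same route as the paper: the paper condenses your two inclusions (orthogonality of $\lt_a$, $\rt_b$, $\kappa$ for norm-one representatives, and triviality of the positive definite symmetric parts when $\sigma,\tau\in\og(\H)$) into the single observation $\mathscr{Y}=\mathscr{H}_{\alpha,\beta}^{-1}(\mathscr{P}^{\alpha,\beta})$, and then concludes exactly as you do via the density of $\mathscr{P}^{\alpha,\beta}$ in $\mathscr{A}_4^{\alpha,\beta}$ and the chain $\mathscr{A}_4^{\alpha,\beta}\simeq\mathscr{P}^{\alpha,\beta}\simeq\mathscr{Y}$.
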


\begin{proof}
  For the first statement, it suffices to observe that
  $\mathscr{Y}=\mathscr{H}_{\alpha,\beta}^{-1}(\mathscr{P}^{\alpha,\beta})$ for all
  $(\alpha,\beta)\in\mathrm{C}_2\times\mathrm{C}_2$.
  Since $\mathscr{P}$ is dense in $\mathscr{A}_4$,
  $\mathscr{P}^{\alpha,\beta}$ is dense in $\mathscr{A}_4^{\alpha,\beta}$, and
  $$\mathscr{A}_4^{\alpha,\beta}\simeq \mathscr{P}^{\alpha,\beta}\simeq
  \mathscr{Y}\simeq \mathscr{P}^{\alpha',\beta'}\simeq
  \mathscr{A}_4^{\alpha',\beta'}$$
  for all $(\alpha,\beta),(\alpha',\beta')\in\mathrm{C}_2\times\mathrm{C}_2$.
\end{proof}

\subsection{Blocks of 2-dimensional real division algebras}

Let $A$ be a division algebra over a field $k$. For each $a \in A \setminus \{0\}$, the isotope $A_{\rt_a^{-1},\lt_a^{-1}}$
has unity $a^2$ \cite[Theorem 7]{albert42a}. In case $\dim(A) = 2$ it follows \vspace{0,1cm}that $A_{\rt_a^{-1},\lt_a^{-1}}$ is 
quadratic, which for $k = \R$ means that $A_{\rt_a^{-1},\lt_a^{-1}}$ is \vspace{0,1cm}isomorphic to $\C$ 
\cite[Corollary 1.5]{dieterichohman}. Every isomorphism $\varphi: A_{\rt_a^{-1},\lt_a^{-1}} \to \C$ is also an\vspace{0,1cm} isomorphism
$\varphi: A \to \C_{\varphi \rt_a \varphi^{-1}, \varphi \lt_a \varphi^{-1}}$ \cite[Lemma 2.5]{bbo81}. 
Altogether this\vspace{0,1cm} shows that the set $\{\C_{\sigma,\tau}\ |\ (\sigma,\tau) \in \gl_\R(\C) 
\times \gl_\R(\C)\}$ of all isotopes of $\C$ is dense in $\mathscr{D}_2$. 

Towards a refinement of this approach to $\mathscr{D}_2$ we adopt the following convention. With reference to the 
standard basis $(1,i)$ of the real vector space $\C$, we identify complex numbers $x_1 + ix_2$ with their\vspace{0,1cm} 
coordinate columns $\left( \begin{array}{c} x_1 \\ x_2 \end{array} \right)$, and linear operators 
$\sigma \in \gl_\R(\C)$ with their\vspace{0,1cm} matrices $S = (\sigma(1)\ \sigma (i)) \in \gl(2)$. In 
particular, complex conjugation and rotation in the complex plane by $\frac{2\pi}{3}$ are described by the matrices
\[ K = \left( \begin{array}{cr}  1&0\\0&-1 \end{array} \right)\hspace{2ex} {\rm and}\hspace{2ex}
R = \frac{1}{2} \left( \begin{array}{cc}  -1 & -\sqrt{3} \\ \sqrt{3} & -1 \end{array} \right) \]  
respectively. They generate the cyclic group ${\rm C}_2 = \langle K \rangle$ of order 2 and the dihedral group ${\rm D}_3 = 
\langle R,K \rangle$ of order 6. Moreover, we denote by $\stwo$ the set of all real $2 \times 2$-matrices that are 
positive definite symmetric and have determinant 1. For each $i \in \{0,1\}$ we set
$\stwo K^i =
 \{ SK^i\ |\ S \in \stwo \}$, and likewise $K^i\stwo = \{ K^iS\ |\ S \in \stwo \}$. Note that 
$\stwo K^i = K^i\stwo$. Finally we denote, for any category $\mathscr{C}$, by $\mathscr{C}(X,Y)$ the morphism 
set in $\mathscr{C}$ with domain $X$ and codomain $Y$. Now the following holds true \cite[Propositions 3.1 and 3.2]{dieterich05}).

\begin{pro}\label{prop2dim}
(i) The set $\left\{ \C_{\sigma,\tau} \left| (\sigma,\tau) \in 
\bigcup_{(i,j) \in \{0,1\}^2} (\stwo K^i \times \stwo K^j) \right.\right\}$ is dense in
$\mathscr{D}_2$.
\\[1ex]
(ii) If $(i,j) \in \{ (0,0), (0,1), (1,0) \}$ and $(A,B), (C,D) \in  \stwo^2$, then
\[\mathscr{D}_2 \left( \C_{AK^i,BK^j},\ \C_{CK^i,DK^j}\right) = \left\{F \in {\rm C}_2\ |\ (FAF^t,FBF^t) = (C,D)\right\}.\]
(iii) If $(A,B), (C,D) \in  \stwo^2$, then
\[\mathscr{D}_2 \left( \C_{KA,KB},\ \C_{KC,KD}\right) = \left\{ F \in {\rm D}_3\ |\ (FAF^t,FBF^t) = (C,D) \right\}.\]
\end{pro}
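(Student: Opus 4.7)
My plan proceeds in three parts corresponding to the three claims.

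For (i), I would start from the already-established density of the class of isotopes $\{\C_{\sigma,\tau}\}$ in $\mathscr{D}_2$ and then refine $(\sigma,\tau)$ to the required normal form. The central tool is polar decomposition, $\sigma = P_\sigma O_\sigma$ and $\tau = P_\tau O_\tau$, with the $P$-factors positive definite symmetric and the $O$-factors in $\og(2) = {\rm SO}(2) \sqcup K\cdot {\rm SO}(2)$. Since $\C$ is commutative, the rotation subgroup ${\rm SO}(2)$ coincides with $\{\lt_z \mid z\in\C,\ |z|=1\}$, and such rotations can be transferred between the two slots of an isotope by means of isomorphisms of the form $\lt_w : \C_{\sigma,\tau} \to \C_{\sigma',\tau'}$. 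Iterating this device, followed by a scalar renormalization that fixes determinants to $1$ (replacing $(\sigma,\tau)$ by a scalar multiple yields a rescaled multiplication on $\C$ which is isomorphic to the original via a suitable real scalar), produces an isomorphic isotope $\C_{\sigma'',\tau''}$ with $(\sigma'',\tau'') \in \stwo K^i \times \stwo K^j$ for some $(i,j)\in\{0,1\}^2$.

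For (ii), I would fix $(i,j) \in \{(0,0),(0,1),(1,0)\}$ and analyze the morphism equation
\[
F\bigl(AK^i(x)\cdot BK^j(y)\bigr) = CK^i(F(x))\cdot DK^j(F(y)), \quad x,y\in\C,
\]
for $F\in\gl_\R(\C)$. Evaluating at pairs $(x,y)\in\{1,i\}^2$ and using that $A,B,C,D\in\stwo$, together with the conformal character of each left multiplication in $\C$, yields identities forcing $F$ to be orthogonal. The asymmetry arising from at least one of $i,j$ being $0$ rules out the extra automorphisms present in (iii), so $F$ must lie in ${\rm C}_2=\langle K\rangle$. Substituting $F^{-1}=F^t$ back into the morphism equation and simplifying then yields precisely $(FAF^t, FBF^t) = (C,D)$.

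For (iii), the situation is different because the isotope parameters $(KA,KB)$ and $(KC,KD)$ produce an automorphism group strictly larger than ${\rm C}_2$. This is transparent on the model case $A=B=I$: the algebra $\C_{K,K}$ has multiplication $x\circ y=\overline{xy}$, and the automorphism condition $F(x\circ y)=F(x)\circ F(y)$ for $F=\lt_z$ reads $z\,\overline{xy}=\bar z^{2}\,\overline{xy}$, which forces $z^{3}=1$ and yields an order-three cyclic group of rotational automorphisms. Combining these with the conjugation $K$ produces the dihedral group ${\rm D}_3=\langle R,K\rangle$. The substitution analysis of (ii) then shows that every morphism $F$ between such isotopes must lie in ${\rm D}_3$, and the residual condition is again $(FAF^t,FBF^t)=(C,D)$.

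The hardest step is the bookkeeping in (i): polar decomposition, transfer of rotations between the two slots, and scalar renormalization must be combined in a coherent order so that the two orthogonal factors end up exactly in $\{I,K\}$ and the two symmetric factors land in $\stwo$ rather than merely in the positive cone of symmetric matrices.
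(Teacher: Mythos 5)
The paper itself offers no proof of this proposition: it is imported wholesale from the reference [Dieterich, \emph{J.\ Algebra Appl.}\ 4 (2005), Propositions 3.1 and 3.2] --- the sentence immediately preceding the statement is the entire justification. So there is no in-text argument to compare yours against; judged against the known argument, your outline is the right one and all of its claims are true. For (i), the combination of polar decomposition, the slot-transfer identity $\C_{\sigma,\tau}=\C_{\lt_c\sigma,\,\lt_{c^{-1}}\tau}$, conjugation by maps $\lt_w$, and positive rescaling does produce representatives in $\stwo K^i\times\stwo K^j$; the only point of care is that commuting a rotation $\lt_u$ past a positive definite symmetric factor replaces that factor by an orthogonal conjugate (hence keeps it in $\stwo$), and that in the case $(i,j)=(1,1)$ the condition on $w$ becomes $w^3\in\R^{>0}\cdot(uv)^{-1}$ --- the first appearance of the cube roots that are responsible for ${\rm D}_3$ in part (iii).

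The genuinely under-justified step is in your part (ii): ``evaluating at pairs $(x,y)\in\{1,i\}^2$ \dots forces $F$ to be orthogonal'' is an assertion rather than an argument. The mechanism that delivers (ii) and (iii) uniformly is this: $F$ is a morphism $\C_{\sigma,\tau}\to\C_{\sigma',\tau'}$ iff $F(uv)=G(u)H(v)$ for $G=\sigma'F\sigma^{-1}$, $H=\tau'F\tau^{-1}$, and every such autotopy of $\C$ has the form $(F,G,H)=(\lt_{ab}\psi,\lt_a\psi,\lt_b\psi)$ with $a,b\in\C^{*}$ and $\psi\in\{{\rm id},K\}$ (substitute $v=1$, then $u=1$, to see that $\lt_{F(1)^{-1}}F$ is a ring automorphism of $\C$). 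Since $\det\sigma=\det\sigma'$ and $\det\tau=\det\tau'$ here, comparing determinants gives $|a|=|b|=1$, whence $F\in\og(2)$; then writing each of the two matrix identities $C\lt_{ab}=\lt_aA_\psi$ (resp.\ its $K$-twisted analogue) in polar form and invoking \emph{uniqueness} of the polar decomposition yields $b=1$ when $i=0$ but $ab=\bar a$ when $i=1$, and symmetrically for $j$. In the three cases of (ii) this forces $a=b=1$, so $F=\psi\in{\rm C}_2$; in (iii) it forces $a=b$, $a^3=1$, $F=\lt_{\bar a}\psi\in{\rm D}_3$; and in every case the same computation returns exactly $C=FAF^{t}$, $D=FBF^{t}$, including the reverse inclusion. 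Your model computation on $\C_{K,K}$ (the condition $z^3=1$) is correct and is precisely the instance $A=B=\I$ of this. So: right route throughout, with one step in (ii) that needs the autotopy-plus-polar-decomposition argument written out rather than gestured at.
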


\noindent
The left actions of ${\rm C}_2$ and ${\rm D}_3$ 
on $\stwo^2$ by simultaneous conjugation give rise to the groupoids $_{{\rm C}_2} \stwo^2$ and  $_{{\rm D}_3} 
\stwo^2$ respectively.
By virtue of Proposition~\ref{prop2dim}(ii) we obtain for each $(i,j) \in \{ (0,0), (0,1), (1,0) \}$ a full and faithful 
functor $\mathscr{J}_{ij}:\ _{{\rm C}_2}\stwo^2 \to \mathscr{D}_2$, defined on objects by $\mathscr{J}_{ij}(A,B) = 
\C_{AK^i,BK^j}$ and on morphisms by \mbox{$\mathscr{J}_{ij}(F,(A,B),(C,D)) = F$.} According to Lemma~\ref{isotopisign} the 
functor \mbox{$\mathscr{J}_{ij}:\ _{{\rm C}_2}\stwo^2 \to \mathscr{D}_2$} induces a functor \mbox{$\mathscr{J}_{ij}:\ 
_{{\rm C}_2}\stwo^2 \to \mathscr{D}_2^{(-1)^j,(-1)^i}$} which in fact, due to Proposition~\ref{prop2dim}(i) and 
Proposition~\ref{isomsign}, is dense, and hence an equivalence of categories. 

Setting out from Proposition~\ref{prop2dim}(iii), we obtain in a similar vein an equi\-valence of categories \mbox{$\mathscr{J}_{11}:\ 
_{{\rm D}_3}\stwo^2 \to \mathscr{D}_2^{--}$}, defined on objects by\linebreak 
\mbox{$\mathscr{J}_{11}(A,B) = \C_{KA,KB}$} and on morphisms by $\mathscr{J}_{11}(F,(A,B),(C,D)) = F$.

\begin{cor}
The blocks $\mathscr{D}_2^{++}, \mathscr{D}_2^{+-}$ and $\mathscr{D}_2^{-+}$ are equivalent to each other, but not 
equivalent to $\mathscr{D}_2^{--}$.
\end{cor}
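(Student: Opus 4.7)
The plan is to deduce the positive part (mutual equivalence of the three plus-containing blocks) directly by chaining the three equivalences produced just above the statement, and then to prove the negative part by showing that the groupoids ${}_{\mathrm{C}_2}\stwo^2$ and ${}_{\mathrm{D}_3}\stwo^2$ are non-equivalent via an automorphism-group invariant.

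For the first part, I would simply observe that the functors $\mathscr{J}_{00},\mathscr{J}_{10}$ and $\mathscr{J}_{01}$ are equivalences of categories from one and the same source ${}_{\mathrm{C}_2}\stwo^2$ onto $\mathscr{D}_2^{++},\mathscr{D}_2^{+-}$ and $\mathscr{D}_2^{-+}$ respectively. Composing any one of these equivalences with a quasi-inverse of another yields the desired equivalences between pairs of these three blocks.

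For the negative part, I would use the fact that an equivalence of categories induces a group isomorphism between the automorphism groups of corresponding objects. Via the equivalence $\mathscr{J}_{ij}$, every automorphism group in $\mathscr{D}_2^{\alpha,\beta}$ with $(\alpha,\beta)\neq(-1,-1)$ is isomorphic to the stabilizer in $\mathrm{C}_2$ of the corresponding pair $(A,B)\in\stwo^2$ under simultaneous conjugation, and hence has order at most $2$. On the other hand, via the equivalence $\mathscr{J}_{11}:{}_{\mathrm{D}_3}\stwo^2\to\mathscr{D}_2^{--}$, the object $\C_{K,K}=\mathscr{J}_{11}(I,I)$ has automorphism group isomorphic to the stabilizer of $(I,I)$ in $\mathrm{D}_3$. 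Since $FIF^t=FF^t=I$ for every $F\in\mathrm{D}_3\subset\og(2)$, this stabilizer is all of $\mathrm{D}_3$, a group of order $6$. Thus $\mathscr{D}_2^{--}$ contains an object whose automorphism group cannot occur in any of the other three blocks, which rules out any equivalence between $\mathscr{D}_2^{--}$ and $\mathscr{D}_2^{\alpha,\beta}$ for $(\alpha,\beta)\neq(-1,-1)$.

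There is really no obstacle of substance: the only potential pitfall is the trivial verification that $(I,I)\in\stwo^2$ (positive definite symmetric, determinant one) and that conjugation by orthogonal matrices fixes $I$, which is immediate because elements of $\mathrm{D}_3$ are orthogonal. The entire argument reduces to invoking Proposition~\ref{prop2dim} and comparing orders of stabilizer subgroups in $\mathrm{C}_2$ versus $\mathrm{D}_3$.
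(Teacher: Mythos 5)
Your proposal is correct and follows essentially the same route as the paper: the positive part by composing the three equivalences out of the common source ${}_{{\rm C}_2}\stwo^2$, and the negative part by comparing automorphism groups, noting that the stabilizer of $(\I,\I)$ under ${\rm D}_3$ is all of ${\rm D}_3$ (order $6$) while every automorphism group arising from ${}_{{\rm C}_2}\stwo^2$ has order at most $2$.
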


\begin{proof}
The first statement follows from the equivalences of categories
\[ \begin{array}{ccccc}  & & & \mathscr{J}_{10} & \mathscr{D}_2^{+-} \\ & \mathscr{J}_{00} & & \nearrow & \\
\mathscr{D}_2^{++} & \longleftarrow & _{{\rm C}_2}\stwo^2 & & \\
 & & & \searrow & \\   & & & \mathscr{J}_{01} & \mathscr{D}_2^{-+}
\end{array} \]
In view of these and the equivalence of categories $\mathscr{J}_{11}:\ _{{\rm D}_3}\stwo^2 \to \mathscr{D}_2^{--}$, 
the second statement is logically equivalent to the categorical inequivalence of $_{{\rm D}_3}\stwo^2$ and 
$_{{\rm C}_2}\stwo^2$, which indeed holds true because $_{{\rm D}_3}\stwo^2((\I,\I),(\I,\I)) = {\rm D}_3$,
 while $\left|_{{\rm C}_2}\stwo^2((A,B),(A,B))\right| \le 2$ for all objects $(A,B) \in \stwo^2$.   
\end{proof}

\begin{rem}
The inclusion functor of the non-full and dense subcategory $_{{\rm C}_2}\stwo^2 \subset\ _{{\rm D}_3}\stwo^2$ together
with the equivalences $\mathscr{J}_{11}:\ _{{\rm D}_3}\stwo^2 \to \mathscr{D}_2^{--}$ and 
$\mathscr{J}_{ij}:\ _{{\rm C}_2}\stwo^2 \to \mathscr{D}_2^{(-1)^j,(-1)^i},\ (i,j) \in \{ (0,0), (1,0), (0,1) \}$,
yields non-full, faithful and dense functors $\mathscr{D}_2^{\alpha\beta} \to \mathscr{D}_2^{--}$ for all 
$(\alpha,\beta) \in \{ (1,1), (1,-1),\linebreak (-1,1) \}$.
\end{rem}

\def\Dbar{\leavevmode\lower.6ex\hbox to 0pt{\hskip-.23ex \accent"16\hss}D}
  \def\cprime{$'$}

\vspace*{1cm}
\noindent
\begin{tabular}{@{}ll}
Erik Darp\"o & Mathematical Institute\\
 & 24-29 St Giles'\\
 & Oxford OX1 3LB\\ 
 & United Kingdom
\\[2ex]
Ernst Dieterich & Matematiska institutionen\\
 & Uppsala universitet\\
 & Box 480\\
 & SE-751 06 Uppsala\\
 & Sweden
\end{tabular}
\\[2ex]
\begin{tabular}{@{}l}
{\tt Erik.Darpo@maths.ox.ac.uk}\\{\tt Ernst.Dieterich@math.uu.se}
\end{tabular}  

\end{document}